\newcounter{commentcounter}
\title[Braiding Gapped Boundaries of Dijkgraaf-Witten Theories]{Braid Group Representations from Braiding Gapped Boundaries of Dijkgraaf-Witten Theories}
\author[N. Escobar]{Nicol\'as Escobar-Velásquez}
\email{ n.escobar1726@uniandes.edu.co}
\address{Departamento de Matem\'aticas, Universidad de los Andes, Bogot\'a, Colombia.}
\author[C. Galindo]{C\'{e}sar Galindo}
\email{cn.galindo1116@uniandes.edu.co}
\address{Departamento de Matem\'aticas, Universidad de los Andes, Bogot\'a, Colombia.}
\author[Z. Wang]{Zhenghan Wang}
\email{zhenghwa@microsoft.com}
\address{Microsoft Research Station Q and Department of Mathematics,
    University of California,
    Santa Barbara, CA
    U.S.A.}
\newtheorem{theorem}{Theorem}[section]
\newtheorem{lem}[theorem]{Lemma}
\newtheorem{cor}[theorem]{Corollary}
\newtheorem{thm}[theorem]{Theorem}
\newtheorem{prop}[theorem]{Proposition}
\theoremstyle{definition}
\newtheorem{defi}[theorem]{Definition}
\newtheorem{example}[theorem]{Example}
\theoremstyle{remark}
\newtheorem{rem}[theorem]{Remark}
\newcommand{\ThreeCocycle}{\omega}
\newcommand{\bV}{\mathbf{V}}
\newcommand{\tCH}{\mathbb{C}_{\gamma}[H]}
\newcommand{\twistedCenterG}{\mathcal{Z}(G,\ThreeCocycle)}
\newcommand{\sta}{\operatorname{Sta}_G(x)}
\newcommand{\stab}{\operatorname{Sta}}
\newcommand{\Or}{\mathcal{O}}
\newcommand{\id}{\operatorname{id}}
\newcommand{\R}{\mathcal{R}}
\newcommand{\ot}{\otimes}
\newcommand{\oneOverG}{\frac{1}{|G|}}
\newcommand{\ra}{\rightarrow}
\newcommand{\Tt}{\mathbb{C}^\times}
\newcommand{\ox}{\otimes}
\newcommand{\eps}{\epsilon}
\newcommand{\Span}{\operatorname{span}}
\newcommand{\si}{\sigma_i}
\newcommand{\Av}{\operatorname{Av}}
\newcommand{\B}{\mathcal{B}}
\newcommand{\SymGp}{S}
\newcommand{\Vect}{\operatorname{Vec}}
\newcommand{\Hom}{\operatorname{Hom}}
\newcommand{\ZZ}{\mathbb{Z}}
\newcommand{\Acts}{\triangleright}
\newcommand{\Normal}{\trianglelefteq}
\begin{document}

\begin{abstract}
We study representations of the  braid groups from braiding gapped boundaries of Dijkgraaf-Witten theories and their twisted generalizations, which are (twisted) quantum doubled topological orders in two spatial dimensions.
We show that the braid representations associated to Lagrangian algebras are all monomial with respect to some specific bases. We give explicit formulas for the monomial matrices and the ground state degeneracy of the Kitaev models that are Hamiltonian realizations of Dijkgraaf-Witten theories. Our results imply that braiding gapped boundaries alone cannot provide universal gate sets for topological quantum computing with gapped boundaries.    
\end{abstract}

\thanks{Galindo was partially supported by the Ciencias B\'asicas funds from Vicerrectoria de Investigaciones de la Universidad de los Andes, Escobar was partially supported by the Convocatoria para la Financiación de Proyectos de Investigación 2017-1 funds from Facultad de Ciencias de la Universidad de los Andes, and Wang by NSF grant DMS-1411212.}

\maketitle

\section{Introduction}
Interesting new directions in topological quantum computing include its extension from anyons to gapped boundaries and symmetry defects with the hope that anyonic systems with non-universal computational power can be enhanced to achieve universality.  Enrichment of topological physics in two spatial dimensions by gapped boundaries has been investigated intensively, but their computing power has not been analyzed in detail yet. One interesting case is gapped boundaries of Dijkgraaf-Witten theories both for their experimental relevance and as theoretical exemplars (see \cite{cong2016topological,cong2017cmp,cong2017defects} and the references therein). 

In this paper, we study representations of the  braid groups from braiding gapped boundaries of Dijkgraaf-Witten theories and their twisted generalizations, which are (twisted) quantum doubled topological orders in two spatial dimensions.
We show that the resulting braid (pure braid) representations are all monomial with respect to some specific bases, hence all such representation images of the braid groups are finite groups (see also \cite{Rowell}). We give explicit formulas for the monomial matrices and the ground state degeneracy of the Kitaev models that are Hamiltonian realizations of Dijkgraaf-Witten theories. Our results imply that braiding gapped boundaries alone cannot provide universal gate sets for topological quantum computing with gapped boundaries.

For a topological order of the form $\mathcal{C}= \mathcal{Z}(\mathcal{S})$, were $\mathcal{S}$ is some unitary fusion category, gapped boundaries are modelled by Lagrangian algebras (see \cite{cong2016topological} and the references therein). For these models the ground manifolds have the form $\Hom_\mathcal{C}(1,A_1\otimes \cdots \otimes A_n)$, where the $A_i$'s are the Lagrangian algebras modelling the gapped boundaries, see \cite[Section 3]{cong2016topological} for details.  Recall that a Lagrangian algebra in any modular (tensor) category is a commutative etále algebra whose quantum dimension is maximal.  A group theoretical modular category (GTMC) is a category of the form $\mathcal{C}= \mathcal{Z}(\Vect_G^\ThreeCocycle)$ for some finite group $G$ and some $\ThreeCocycle\in Z^3(G,\Tt)$, where $\mathcal{Z}$ denotes the Drinfeld center and $\Vect_G^\ThreeCocycle$ is the category of finite dimensional $G$-graded vectors spaces with associativity constraint twisted by $\ThreeCocycle\in H^3(G,\Tt)$.

Kitaev \cite{kitaev2003fault} proposed Hamiltonian realizations of Dijkgraaf-Witten theories, who\-se topological orders are GTMCs. Moreover, extensions of these Hamiltonian realizations to surfaces with boundaries can be constructed from Lagrangian algebras \cite{bravyi1998quantum,bombin2008family,beigi2011quantum,kitaev2012models}. 

Lagrangian algebras in GTMC's are one-one correspondence with indecomposable modular categories of $\Vect_G^\ThreeCocycle$ \cite{davydov2013witt}, which are in bijection with pairs $(H,\gamma)$, where $H$ is a subgroup of $G$ and $\gamma\in C^2(H,\Tt)$ such that $\delta(\gamma)=\ThreeCocycle|_{H^{\times 3}}$, all up to conjugation \cite{natale2016equivalence}. A more direct description between Lagrangian algebras and pairs $(H,\gamma)$ can be found in \cite{davydov2010modular}. 

Recently, a quantum computing scheme to use gapped boundaries to achieve universality is proposed \cite{cong2016topological, cong2017cmp,cong2017defects,cong2017universal}. Braiding gapped boundaries can be either added to braiding anyons as in Kitaev's original proposal or as new computing primitives supplemented with other topological operations. Gapped boundaries lead to additional degeneracy to the topologically protected subspace, which potentially allows the implementation of more powerful gates. More precisely, the new gates come from representation matrices of the braid groups, $\B_n$, on objects of the GTMCs that are tensor products of Lagrangian algebras. But a characterization of the computational power of these new braid representations, mathematically a study of the representation images, was left as an important open problem \cite{cong2016topological,cong2017universal}. 

The goal of this paper is to provide such a characterization. We find a canonical monomial structure for Lagrangian algebras in $\mathcal{Z}(\Vect_G^\ThreeCocycle)$, which allows us to compute things more easily. This paper is organized as follows. Section \ref{Monomial} develops the theory of monomial representations. Specifically, it shows how to calculate invariants for a representation of $G$ using the monomial structure. In Section \ref{Section: monomial twisted Yetter-Drinfeld} we introduce the notion of monomial twisted Yetter-Drinfeld. We use  the theory developed in Section \ref{Monomial} to give an explicit description and a basis for $\Hom_{\mathcal{Z}(\Vect_G^\ThreeCocycle)}(\mathbb{C}, V^{\otimes n})$ if $V$ is a monomial object. Next, we describe the representation of $\B_n$ with respect to this basis. Theorem \ref{thm: YD monomial } states the representation is monomial and Theorem \ref{thm:action in basis} gives an explicit formula for the non-zero entries. In Section \ref{Section: representation Lagrangian} we prove that every Lagrangian algebra in $\mathcal{Z}(\Vect_G^\ThreeCocycle)$   has  a canonical monomial structure. Then the results of Section \ref{Section: monomial twisted Yetter-Drinfeld} are applied to Lagrangian algebras in $\mathcal{Z}(\Vect_G^\ThreeCocycle)$. We finish the section  developing some examples and applications.

\section{Monomial representations}\label{Monomial}

In this section we recall some basic definitions and results on monomial representations of groups.

\begin{defi}
A \textit{monomial space} is a triple $\bV=(V,X,(V_x)_{x\in X})$ where,
\begin{itemize}
\item[(i)] V is a finite dimensional complex vector space.
\item[(ii)] X is a finite set. 
\item[(iii)] $(V_x)_{x\in X}$ is a family of one dimensional subspaces of $V$, indexed by $X$, such that $ V=\bigoplus_{x\in X} V_x$.
\end{itemize}

Let $G$ be a group. By a  \textit{monomial representation} of $G$ on $\bV$ we mean a group homomorphism \[\Gamma: G\to \operatorname{GL}(V),\] such that for every $g\in G,$ $\Gamma(g)$ permutes the $V_x$'s; hence, $\Gamma$ induces an action by permutation of $G$ on $X$. We will denote $\Gamma(g)(v)$ just by $g\Acts v$.

\end{defi}

If $V$ is a representation of $G$, we denote by $V^G$
the subspace of $G$-invariant vectors, \textit{i.e.},
\[V^G=\{v\in V: g\Acts v=v, \text{ for all } g\in G\}.\]

For each $x\in X$,  we will denote $\sta$ the stabilizer of $x$ and by  $\Or_G(x)$ the $G$-orbit of $x$.  For $G$ finite, and a representation $V$ define 
\begin{align*}
    \Av_G: V \to V,&&
    v\mapsto \oneOverG \sum_{g\in G} g\Acts v.
\end{align*}
It is easy to see that $\Av_G$ is a $G$-linear projection onto $V^G$. We define,\begin{align*}
    \Av_G(V_\Or):=\Av_G(V_x),&&  x\in \Or(x),
\end{align*}
since for any $x'\in \Or_G(x)$, $\Av_G(V_x)=\Av_G(V_{x'})$.

We say that an
element $x \in X$ is \textit{regular} under the monomial action of $G$ if $\Gamma(g)$ is the identity map on $V_x$, for all $g\in \stab_G(x)$.  

Let us write $X/G$ for the set of orbits of the action of $G$ on $X$ and $\tilde{X}$ for the regular ones.

\begin{prop}\cite[Lemma 9.1]{karpilovsky1985projective}\label{Prop:Lemma Karp}
Let  $\bV=(V,X,(V_x)_{x\in X})$ be a monomial representation of $G$.
\begin{enumerate}[leftmargin=*,label=\rm{(\alph*)}]
    \item $x\in X$ is a regular element if and only if $\Av_G(V_x)\neq 0$.
    \item If $x \in X$ is a regular element under the monomial action of $G$, then
so are all elements in the $G$-orbit of $x$.
\item 
The triple 
\[
\bV^G = \left( V^G, \tilde{X}, \big (\Av_G(V_\Or)\big)_{\Or \in \tilde{X}} \right)
\]
is a monomial space. 
\item The dimension of $V^
G$ is equal to the number of regular $G$-orbits
under the monomial action of $G$ on $X$.
\end{enumerate}
\end{prop}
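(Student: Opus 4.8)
The plan is to prove the four parts in the order (a), (b), (c), (d), since each builds on the previous. The central object is the averaging projection $\Av_G\colon V\to V^G$, and the key structural fact I will use repeatedly is that $G$ permutes the lines $V_x$, so for each $x$ and each $g\in\sta$ we have $g\Acts V_x=V_x$, i.e. $g$ acts on the one-dimensional space $V_x$ by a scalar $\chi_x(g)\in\Cx$; this defines a character $\chi_x\colon\sta\to\Cx$, and $x$ is regular precisely when $\chi_x$ is trivial.

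For part (a), fix $0\neq v\in V_x$ and compute $\Av_G(v)$ by splitting the sum over $G$ into cosets of $H:=\sta$. Choosing coset representatives $g_1,\dots,g_m$ for $G/H$, one gets $\Av_G(v)=\frac1{|G|}\sum_i g_i\Acts\bigl(\sum_{h\in H}h\Acts v\bigr)=\frac1{|G|}\sum_i g_i\Acts\bigl(\bigl(\sum_{h\in H}\chi_x(h)\bigr)v\bigr)$. The vectors $g_i\Acts v$ lie in the distinct lines $V_{g_i\Acts x}$ over the orbit $\Or_G(x)$, hence are linearly independent; therefore $\Av_G(v)=0$ if and only if $\sum_{h\in H}\chi_x(h)=0$. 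By the orthogonality relations for the character $\chi_x$ of the finite group $H$, this sum is $|H|$ when $\chi_x$ is trivial and $0$ otherwise. Thus $\Av_G(V_x)\neq 0\iff \chi_x$ trivial $\iff x$ regular.

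Part (b) is then immediate: if $x$ is regular and $x'=g\Acts x$, then $\sta[x']=g\,\sta\,g^{-1}$ and for $h'\in\sta[x']$ the action of $h'$ on $V_{x'}=g\Acts V_x$ is conjugate to the action of $g^{-1}h'g\in\sta$ on $V_x$, which is trivial; alternatively, $\Av_G(V_{x'})=\Av_G(V_x)\neq 0$ by the remark in the text that averaging is constant on orbits, so apply (a). For part (c), I must check the three axioms of a monomial space for $\bV^G$. Finite-dimensionality of $V^G$ and finiteness of $\tilde X$ are clear; the content is that $V^G=\bigoplus_{\Or\in\tilde X}\Av_G(V_\Or)$ with each summand one-dimensional. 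Surjectivity onto the span: any $v\in V^G$ decomposes as $v=\sum_{x\in X}v_x$ with $v_x\in V_x$; applying the projection $\Av_G$ gives $v=\Av_G(v)=\sum_x\Av_G(v_x)$, and by (a) only the regular $x$ contribute, so $v\in\Span\{\Av_G(V_\Or):\Or\in\tilde X\}$. For the directness and one-dimensionality: from the coset computation in (a), for regular $x$ we have $\Av_G(v)=\frac{|H|}{|G|}\sum_i g_i\Acts v$, a nonzero vector whose components across the distinct lines $\{V_{y}:y\in\Or_G(x)\}$ are all nonzero; hence $\Av_G(V_\Or)$ is one-dimensional, and for two distinct regular orbits the supports (sets of lines $V_y$ on which the representative vectors have nonzero component) are disjoint, forcing the sum $\sum_{\Or\in\tilde X}\Av_G(V_\Or)$ to be direct. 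Finally, part (d) is a corollary: $\dim V^G=\bigl|\tilde X\bigr|$ is exactly the number of regular $G$-orbits, by (c).

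The main obstacle is the linear-independence/support bookkeeping underlying (a) and (c): one must be careful that the images $g_i\Acts v$ genuinely sit in distinct one-dimensional summands $V_{g_i\Acts x}$ (this uses that the $g_i$ run over $G/\sta$, so the points $g_i\Acts x$ are distinct) and that this makes $\Av_G(V_x)$ nonzero with full support on the orbit. Once that is set up cleanly, the character orthogonality input is routine and the remaining parts follow formally. Of course, since this is quoted as \cite[Lemma 9.1]{karpilovsky1985projective}, one may simply cite it; the sketch above records the argument for completeness.
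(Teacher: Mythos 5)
Your argument is correct and complete; note that the paper itself offers no proof of this proposition, simply citing \cite[Lemma 9.1]{karpilovsky1985projective}, so there is nothing internal to compare against. Your route --- viewing the stabilizer action on the line $V_x$ as a character $\chi_x$ of $\sta$, splitting the averaging sum over cosets of $\sta$, and invoking orthogonality to get $\Av_G(v)=\tfrac{1}{|G|}\bigl(\sum_{h}\chi_x(h)\bigr)\sum_i g_i\Acts v$ --- is the standard one, and the two points you flag as requiring care (that the $g_i\Acts v$ land in pairwise distinct lines $V_{g_i\Acts x}$, hence are linearly independent, and that distinct regular orbits give summands with disjoint support) are exactly the right ones; the rest (surjectivity of $\sum_{\Or}\Av_G(V_\Or)$ onto $V^G$ via $v=\Av_G(v)$, and part (d) as a dimension count) follows formally as you say. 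The only implicit hypothesis worth making explicit is that $\Gamma$ is an honest homomorphism, so $(g_ih)\Acts v=g_i\Acts(h\Acts v)$ without cocycle corrections --- which holds under the paper's definition of monomial representation.
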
\qed

Let $\bV=(V,X,(V_x)_{x\in X})$ and $\bV'=(V',Y,(V'_y)_{y\in Y})$ be monomial spaces. A linear isomorphism $T:V\ra V'$ is called an \textit{isomorphism of  monomial spaces} if $T(V_x)=V'_y$ for  any $x\in X$. 

\begin{prop}\label{iso of invariants}
Let $\bV=(V,X,(V_x)_{x\in X})$ and $\bV'=(V',Y,(V'_y)_{y\in Y})$ be monomial representations of a finite group $G$. If $T:V\to V'$ is a $G$-linear isomorphism of monomial spaces, then  $T|_{V^G}:\bV^G\to \bV'^G$ is an isomorphism of monomial spaces. 
\end{prop}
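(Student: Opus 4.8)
The plan is to show that $T$ transports the whole monomial structure of $\bV^G$ onto that of $\bV'^G$. First I would turn the hypothesis on $T$ into combinatorial data: since $T(V_x)$ is one of the lines $V'_y$ for every $x\in X$, the rule $T(V_x)=V'_{\tau(x)}$ defines a map $\tau\colon X\to Y$; it is injective because $T$ is injective and each $V_x\ne 0$, and since $|X|=\dim V=\dim V'=|Y|$ it is a bijection. Next, using that $T$ is $G$-linear and that the monomial actions permute the lines ($g\Acts V_x=V_{g\Acts x}$, $g\Acts V'_y=V'_{g\Acts y}$),
\[
V'_{\tau(g\Acts x)}=T(V_{g\Acts x})=T(g\Acts V_x)=g\Acts T(V_x)=g\Acts V'_{\tau(x)}=V'_{g\Acts\tau(x)},
\]
so $\tau(g\Acts x)=g\Acts\tau(x)$. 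Thus $\tau$ is $G$-equivariant; in particular $\stab_G(x)=\stab_G(\tau(x))$ and $\tau$ descends to a bijection $\bar\tau\colon X/G\to Y/G$.

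Second, since both $T$ and $T^{-1}$ are $G$-linear we get $T(V^G)=V'^G$, so $T|_{V^G}\colon V^G\to V'^G$ is already a linear isomorphism; and $G$-linearity also gives $T\circ\Av_G=\Av_G\circ T$ on all of $V$. Applying the last identity to the line $V_x$ yields $T(\Av_G(V_x))=\Av_G(T(V_x))=\Av_G(V'_{\tau(x)})$. Because $T$ is injective, $\Av_G(V_x)\ne 0$ if and only if $\Av_G(V'_{\tau(x)})\ne 0$, so by part (a) of Proposition \ref{Prop:Lemma Karp} the element $x$ is regular if and only if $\tau(x)$ is regular. Since regularity is an orbit property (part (b) of the same proposition), $\bar\tau$ restricts to a bijection $\tilde X\to\tilde Y$ of regular orbits.

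Finally I would assemble the pieces. By Proposition \ref{Prop:Lemma Karp}(c), $\bV^G=(V^G,\tilde X,(\Av_G(V_\Or))_{\Or\in\tilde X})$ and $\bV'^G=(V'^G,\tilde Y,(\Av_G(V'_{\Or'}))_{\Or'\in\tilde Y})$ are monomial spaces, and for a regular orbit $\Or\in\tilde X$ with representative $x$ we have $T(\Av_G(V_\Or))=T(\Av_G(V_x))=\Av_G(V'_{\tau(x)})=\Av_G(V'_{\bar\tau(\Or)})$, so $T|_{V^G}$ carries each distinguished line of $\bV^G$ onto a distinguished line of $\bV'^G$ — which is exactly the assertion that it is an isomorphism of monomial spaces. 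I do not expect a genuine obstacle here: the argument is a diagram chase. The one point that deserves a moment's care is the observation that $T$ commutes with the averaging projector $\Av_G$, since this is precisely what lets one carry the regularity criterion of Proposition \ref{Prop:Lemma Karp}(a) across $T$ and thereby match the regular orbits, and hence the distinguished lines, on the two sides.
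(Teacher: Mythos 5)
Your proof is correct and follows essentially the same route as the paper's: both arguments hinge on commuting $T$ with $\Av_G$ and using injectivity of $T$ to transfer the regularity criterion of Proposition \ref{Prop:Lemma Karp}(a), thereby matching distinguished lines. The only difference is that you make the induced bijection $\tau\colon X\to Y$ and its $G$-equivariance explicit, which the paper leaves implicit.
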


\begin{proof}
Clearly, $T|_{V^G}:V^G \ra V'^G$ is a linear isomorphism. 
Let $x\in X$ be a regular element. Since $T$ is an isomorphism of monomial spaces, there is some $y \in Y$ such that $T(V_x)=V'_y$. In that case: 

\[
\Av_G(V'_y)=\Av_G(T(V_x))=T(\Av_G(V_x)).
\] 
This implies $y$ is regular, because $\Av_G(V_x)\neq \{0\}$ and $T$ is an isomorphism. It also says $T|_{V^G}(\Av_G(V_{\Or(x)}))=\Av_G(V_{\Or(y)}')$, which means $T|_{V^G}$ is an isomorphism of monomial spaces. 
\end{proof}

\section{Monomial representation of the braid group}\label{Section: monomial twisted Yetter-Drinfeld}

In this section we introduce the notion of monomial twisted Yetter-Drinfeld and prove that the representation of the braid groups $\mathcal{B}_n$ over $\Hom_{\mathcal{Z}(\Vect_G^\ThreeCocycle)}(\mathbb{C},V^{\otimes n})$ is monomial if $V$ is monomial.
\subsection{Dijkgraaf-Witten theories}

Let $G$ be a discrete group. A (normalized)  3-cocycle $\ThreeCocycle \in Z^3(G, \Tt)$ is a map $\ThreeCocycle:G\times G\to \Tt$ such that 
\begin{align*}
\ThreeCocycle(ab,c,d)\ThreeCocycle(a,b,cd)&=
\ThreeCocycle(a,b,c)\ThreeCocycle(a,bc,d)\ThreeCocycle(b,c,d), & \ThreeCocycle(a,1,b)=1,
\end{align*}
for all   $a,b,c,d\in G. $ 

Let us recall the description of the modular category $\mathcal{Z}(\Vect_G^\ThreeCocycle)$, the Drinfeld center of the category $\Vect_G^\ThreeCocycle$ sometimes called the category of twisted Yetter-Drinfeld modules. The category $\mathcal{Z}(\Vect_G^\ThreeCocycle)$ is braided equivalent to the representations of the twisted Drinfeld double defined by Dijkgraaf, Pasquier and Roche in
\cite[Section 3.2]{DPR}.

Given $\ThreeCocycle \in Z^3(G; \Tt)$, we define 
\begin{align*}
  \ThreeCocycle(g,g';h)&:=\frac{\ThreeCocycle(g,^{g'}h,g')}{\ThreeCocycle(^{gg'}h,g,g')\ThreeCocycle(g,g',h)},\\
  \ThreeCocycle(g;f,h)&:=\ThreeCocycle(g,f,h)\ThreeCocycle(\prescript{g}{}{f},g,h)^{-1}\ThreeCocycle(\prescript{g}{}{f},\prescript{g}{}{h},g),
\end{align*}
for $f,g,g^\prime, h \in G$.

The objects of $\mathcal{Z}(\Vect_G^\ThreeCocycle)$ are $G$-graded vector spaces $V=\bigoplus_{g\in G} V_{g}$ with a linear map
$\triangleright: \mathbb{C}^{\ThreeCocycle}G\ot V\to V$ 
such that $1\Acts v=v$ for all $v\in V$,
\begin{align*}
(gh)\Acts v &=\ThreeCocycle(g,h;k) (g\Acts (h\Acts v)), & g,h,k & \in G, & v &\in V_k,
\end{align*}
satisfying the following compatibility condition:
\begin{align*}
g\Acts V_h & \subseteq V_{ghg^{-1}}, & g,h & \in G.
\end{align*}
Morphisms in $\mathcal{Z}(\Vect_G^\ThreeCocycle)$ are $G$-linear $G$-homogeneous maps. 
The tensor product of $V=\oplus_{g\in G}V$ and $W=\oplus_{g\in G}w$  is
$V\otimes W$ as vector space, with
\[(V\otimes W)_g=\bigoplus_{h\in G}V_h\otimes W_{h^{-1}g},\]
and for all $v\in V_g, w\in W_l$,
\[h\Acts (v\otimes w)= \ThreeCocycle(h;g,l)(h\Acts v) \otimes (h\Acts w).\]
For $V, W, Z\in\  \mathcal{Z}(\Vect_G^\ThreeCocycle)$ the associativity constrain is defined by \begin{align*}
a_{V,W,Z}: (V\ot W)\ot Z&\to V\ot (W\ot Z)\\ 
(v_g\ot w_h)\ot z_k &\mapsto \ThreeCocycle(g,h,k) v_g\ot( w_h\ot z_k)
\end{align*}
for all $g,h,k \in G, v_g\in V_x, w_h\in W_h, z_k\in Z_k$. The category is tensor braided, with braiding $c_{V,W}:V\ot W\to W\ot V$, $V,W\in\mathcal{Z}(\Vect_G^\ThreeCocycle)$,
\begin{align*}
c_{V,W}(v\ot w) &= (g\Acts w) \ot v, & g & \in G, & v &\in V_g, & w&\in W.
\end{align*}

\subsection{Braid group representation of twisted Yetter-Drinfeld modules}

Since the braided category $\mathcal{Z}(\Vect_G^\ThreeCocycle)$ is not strict,  we must be careful about the way we associate terms when we consider tensor products with more than two objects. For a list of objects $A_1, A_2, \ldots, A_n \in \mathcal{Z}(\Vect_G^\ThreeCocycle)$ we define
\[A_1\ox \cdots \ox A_n := (\cdots (A_1\ox A_2)\ox \cdots \ox A_{n-1})\ox A_n, \] 
and an isomorphism by
\begin{align}
\si' = (a^{-1}_{A_1  \ox \cdots \ox A_{i-1},A_{i+1},A_{i}}&\ox \id_{A_{i+2} \ox \cdots \ox A_{n}})\circ \label{generadores}\\
(\id_{A_1\ox \cdots A_{i-1}}&\ox c_{A_i,A_{i+1}}\ox \id_{A_{i+2} \ox \cdots \ox A_{n}})\circ \notag\\
 (&a_{A_1  \ox \cdots \ox A_{i-1},A_i,A_{i+1}}\ox \id_{A_{i+2} \ox \cdots \ox A_{n}}), \notag
\end{align}where $a_{V,W,Z}$ denotes the associativity constrains.

If $A=A_1=\cdots =A_n$, there exists a unique group homomorphism $\rho_n:\mathcal{B}_n\to \operatorname{Aut}_{\mathcal{Z}(\Vect_G^\ThreeCocycle)}(A^{\otimes n})$ sending the generator $\sigma_i \in \mathcal{B}_n$ to $\sigma'_i$. 

In general, the pure braid group $\mathcal{P}_n$ acts on $A_1\ot \cdots \ot A_n$, in the sense that there exists group homomorphism $\rho_n:\mathcal{P}_n\to \operatorname{Aut}_{\mathcal{Z}(\Vect_G^\ThreeCocycle)}(A_1\ot \cdots \ot A_n)$. 
\subsection{Crossed $G$-sets}

Let $G$ be a group. A (left) \textit{crossed} $G$-\textit{set} is a left $G$-set $X$ and a grading function $|-|:X\to G$ such that 
\[|gx|=g|x|g^{-1}\]for all $x\in X, g\in G$.
If  $X$ and $Y$ are crossed $G$-sets, a  $G$-equivariant map $f:X\to Y$ is a morphism of crossed $G$-sets if  $|f(x)|=|x|$ for all $x\in X$.

If $X$ and $Y$ are crossed $G$-sets, the cartesian product $X\times Y$ is a crossed $G$-set with the diagonal action and grading map$|(x,y)|=|x||y|$.

The category of crossed $G$-sets is a braided category with braiding 
\begin{align*}
    c_{X,Y}:X\times Y&\to Y\times X\\
     (x,y)&\mapsto (|x|\Acts y,x).
\end{align*}

Thus, given a crossed  $G$-set $X$ the braid group $\mathcal{B}_n$ acts on $X^{n}$, in the following way
\[\sigma_i':=\id_{X^{ i-1}}\times c_{X,X}\times \id_{X^{n-(i-1)}}.\]

\subsection{Monomial objects of $\mathcal{Z}(\Vect_G^\ThreeCocycle)$}

Let $G$ be a finite group and $\ThreeCocycle \in Z^3(G,\Tt)$ a 3-cocycle. 
\begin{defi}
A monomial Yetter-Drinfeld module  is a monomial space $\bV=(V,X,(V_x)_{x\in X})$ such that $V\in \mathcal{Z}(\Vect_G^\ThreeCocycle)$, the twisted $G$-action $\Acts$ permutes the $V_x$'s and each $V_x$ is $G$-homogeneous.
\end{defi}

\begin{rem}
\begin{enumerate}[leftmargin=*,label=\rm{(\alph*)}]
    \item If $\bV=(V,X,(V_x)_{x\in X})$ is a monomial Yetter-Drinfeld module, the set $X$ is a crossed $G$-set with the induced $G$-action and the grading map.
    \item If $\bV=(V,X,(V_x)_{x\in X})$ is a monomial Yetter-Drinfeld module, the action of $G$ on  $(V_e,X_e,(V_x)_{x\in X_e})$ is monomial, where $X_e:=\{x\in X: |x|=e\}$ and $V_e=\oplus_{x\in X_e}V_x$.
\end{enumerate}
\end{rem}

\begin{thm}\label{thm: YD monomial }
Let $G$ be a finite group, $\omega \in Z^3(G,\Tt)$. If $\bV=(V,X,(V_x)_{x\in X})$ is a monomial Yetter-Drinfeld module in $\mathcal{Z}(\Vect_G^\ThreeCocycle)$, then
\begin{enumerate}[leftmargin=*,label=\rm{(\alph*)}]
    \item  the action of $\mathcal{B}_n$ on $\Hom_{\mathcal{Z}(\Vect_G^\ThreeCocycle)}(\mathbb{C}, V^{\otimes n} )$ is monomial,
    \item the dimension of $\Hom_{\mathcal{Z}(\Vect_G^\ThreeCocycle)}(\mathbb{C}, V^{\otimes n} )$ is equal to the number of regular $G$-orbits under the monomial action of $G$ on \[(X^{ n})_e:=\{(x_1,\ldots,x_n): |x_1|\cdots |x_n|=e\}.\]
\end{enumerate}
\end{thm}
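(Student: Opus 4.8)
The plan is to reduce the statement about $\Hom_{\mathcal{Z}(\Vect_G^\ThreeCocycle)}(\mathbb{C}, V^{\otimes n})$ to an instance of Proposition~\ref{Prop:Lemma Karp} applied to a suitable monomial representation of $G$, and to realize the $\B_n$-action inside that monomial structure. The first step is to set up the correct monomial space. Given the monomial Yetter-Drinfeld module $\bV=(V,X,(V_x)_{x\in X})$, I would tensor it with itself $n$ times: since each $V_x$ is one-dimensional and $G$-homogeneous, the space $V^{\otimes n}$ decomposes as $\bigoplus_{(x_1,\dots,x_n)\in X^n} V_{x_1}\otimes\cdots\otimes V_{x_n}$, and each summand is a one-dimensional $G$-homogeneous subspace of degree $|x_1|\cdots|x_n|$. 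By the compatibility condition $g\Acts V_h\subseteq V_{ghg^{-1}}$ and the tensor-product formula for the twisted action, $G$ permutes these lines, inducing exactly the diagonal crossed $G$-set action on $X^n$. This makes $(V^{\otimes n}, X^n, (V_{x_1}\otimes\cdots\otimes V_{x_n}))$ a monomial Yetter-Drinfeld module; restricting to the degree-$e$ component gives a genuine (untwisted, since the action is $G$-linear on the degree-$e$ part — one must check the $\ThreeCocycle(g,h;k)$ factor is trivial there) monomial representation of $G$ on $V^{\otimes n}_e$ with index set $(X^n)_e$.

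Next I would identify $\Hom_{\mathcal{Z}(\Vect_G^\ThreeCocycle)}(\mathbb{C}, V^{\otimes n})$ with $(V^{\otimes n}_e)^G$: a morphism from the trivial object is the same as a $G$-invariant vector of degree $e$. Part (b) is then immediate from Proposition~\ref{Prop:Lemma Karp}(d): the dimension equals the number of regular $G$-orbits on $(X^n)_e$. For part (a), I need to show the $\B_n$-action (via the $\sigma'_i$ of \eqref{generadores}) preserves the monomial structure on $(V^{\otimes n}_e)^G$, i.e. sends each line $\Av_G(V_\Or)$ to another such line. The key observation is that the braiding $c_{V,W}(v\otimes w)=(g\Acts w)\otimes v$ for $v\in V_g$, together with the associativity constraints (which act by scalars $\ThreeCocycle(\dots)$), all send a tensor-product line $V_{x_1}\otimes\cdots\otimes V_{x_n}$ to another such line up to a scalar — precisely the line indexed by the image of $(x_1,\dots,x_n)$ under the crossed-$G$-set braiding $\sigma'_i$ on $X^n$ defined in the "Crossed $G$-sets" subsection. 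Hence each $\rho_n(\sigma_i)$ is a monomial operator on $V^{\otimes n}$ compatible with the $\B_n$-action on $X^n$; since it is also $G$-linear, by Proposition~\ref{iso of invariants} its restriction to $(V^{\otimes n}_e)^G$ is an isomorphism of monomial spaces, and monomiality of the whole representation follows because the $\sigma_i$ generate $\B_n$.

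The main obstacle I anticipate is bookkeeping with the cocycle factors: one must verify that each $\sigma'_i$, being built from associators and the braiding, genuinely maps a single line $V_{x_1}\otimes\cdots\otimes V_{x_n}$ into a single line (no "spreading" across multiple summands), and that the induced permutation of $X^n$ agrees on the nose with the crossed-$G$-set braiding. The associator is diagonal in the $X^n$-basis so it is harmless, and the braiding is monomial by inspection; the only subtlety is that in \eqref{generadores} the associators are applied to grouped tensor products $(A_1\otimes\cdots\otimes A_{i-1})$, so one should track the total degree $|x_1|\cdots|x_{i-1}|$ acting on the $i$-th factor — but this is exactly the data recorded by the crossed-$G$-set structure, so it matches. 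A secondary point is checking that the twisted-action factor $\ThreeCocycle(g,h;k)$ restricts trivially enough on the degree-$e$ sector that $\Av_G$ is an honest projection there, which follows since on homogeneous components of degree $e$ the relevant two-cocycle obstruction vanishes, making $\Acts$ an ordinary $G$-action on $V^{\otimes n}_e$. Once these compatibilities are in place, both (a) and (b) drop out of the results of Section~\ref{Monomial}.
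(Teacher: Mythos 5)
Your proposal is correct and follows essentially the same route as the paper: realize $\Hom_{\mathcal{Z}(\Vect_G^\ThreeCocycle)}(\mathbb{C},V^{\otimes n})$ as $(V^{\otimes n}_e)^G$ for the monomial $G$-action on $(V^{\otimes n}_e,(X^n)_e,(V_{x_1}\otimes\cdots\otimes V_{x_n}))$, apply Proposition~\ref{Prop:Lemma Karp} for part (b), and use that each $\sigma_i'$ is a $G$-linear isomorphism of monomial spaces together with Proposition~\ref{iso of invariants} for part (a). Your explicit checks (that the associators are diagonal, that the braiding permutes the lines compatibly with the crossed-$G$-set structure, and that the twisted action is an honest $G$-action on the degree-$e$ sector since $\ThreeCocycle(g,h;e)=1$ for a normalized cocycle) are exactly the points the paper leaves implicit.
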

\begin{proof}
The action of $G$ on  $(V^{\otimes n}_e,(X^{n})_e,(V_x)_{x\in X_e})$ is monomial. Hence by Proposition \ref{Prop:Lemma Karp}, the triple 
\[
\bV_e^G := \left( ((V^{\otimes n}_e)^G, \widetilde{(X^{n})_e}, \big (\Av_G((V^{\otimes n}_e)_\Or)\big)_{\Or \in \widetilde{(X^{n})_e}} \right)
\]
is a monomial space. Since $\Hom_{\mathcal{Z}(\Vect_G^\ThreeCocycle)}(\mathbb{C}, V^{\otimes n} )= (V^{\otimes n})_e^G$, and each of the automorphisms  $\sigma'$ are morphisms in  $\mathcal{Z}(\Vect_G^\ThreeCocycle)$, hence $\sigma'|_{V^{\otimes n}_e}:(V^{\otimes n}_e,(X^{n})_e,(V_x)_{x\in X_e})\to (V^{\otimes n}_e,(X^{n})_e,(V_x)_{x\in X_e})$ is a $G$-linear isomorphism of monomial spaces. It follows from Proposition \ref{iso of invariants} that $\sigma'|_{(V^{\otimes n})_e^G}$ is an isomorphism of monomial spaces. Thus, the linear representation \begin{align*}
    \rho_n: \mathcal{B}_n&\to \operatorname{GL}((V^{\otimes n}_e)^G)\\
    \sigma &\mapsto \sigma',
\end{align*} is a monomial representation of $\mathcal{B}_n$.
The second part follows immediately from Proposition \ref{Prop:Lemma Karp}.
\end{proof}

\subsection{Monomial matrices of the braid representation}\label{subsection:monomial basis}

In this subsection we  obtain concrete formulas for the monomial braid representations associated to a monomial Yetter-Drinfeld module.

Let $G$ be a finite group, $\omega \in Z^3(G,\Tt)$ and $\bV=(V,X,(V_x)_{x\in X})$ be a monomial Yetter-Drinfeld module. If we fix non-zero vectors $\mathcal{S}:=\{v_x\in V_x:x\in X\}$, the twisted $G$-action defines a map \[\lambda_X:G\times X\to \Tt,\]
by $g \Acts v_x= \lambda_X(g;x)v_{gx}$, where $g \in G, x\in X$. 

For the monomial Yetter-Drinfeld module $\bV^{\otimes n}=(V^{\otimes n} ,X^{n},(V_x)_{x\in X^{n}})$ and the basis  $\mathcal{S}^{\otimes n}:=\{v_{x_1}\otimes \cdots \otimes v_{x_n}:x_i\in X, \ 1\leq i\leq n\}$, the 
action is determined by the map $\lambda_{X^n}:G\times X^n\to \Tt,$
\begin{align}
    \lambda_{X^n}(g;x_1,\ldots,x_n)&:=\prod_{i=1}^n\lambda_X(g;x_i)\ThreeCocycle(g;|x_1||x_2|\cdots |x_{n-1}|,|x_n|)\times \label{definition of lambda x n}\\ &\ThreeCocycle(g;|x_1|\cdots  |x_{n-2}|,|x_{n-1}|)
    \cdots \ThreeCocycle(g;|x_1|,|x_2|), \notag
\end{align}that is, \[g\rhd  (v_{x_1}\otimes \cdots \ot v_{x_n})= \lambda_{X^n}(g;x_1,\ldots,x_n) (v_{gx_1}\otimes \cdots \ot v_{gx_n}),\] for all $g\in G, x_1,x_2\ldots , x_n \in X$. Hence an element $(x_1,\ldots, x_n) \in (X^{n})_e$ is regular if and only if \begin{align}\label{condicion regular}
\lambda_{X^n}(g;x_1\ldots, x_n)=1,&& \text{ for all } g \in \bigcap_{i=1}^n \stab(x_i).    
\end{align}

Let $\R\subset X^n_e$ be a set of representatives of the regular orbits of $X_e^{\times n}$. Let $\mathcal{S}_{reg}= \{v_{x_1}\ot \cdots \ot v_{x_n}: (x_1,\ldots,x_n)\in R \}$.   By Proposition \eqref{Prop:Lemma Karp} the set $\{\Av_G(v): v\in \mathcal{S}_{reg}\}$ is a basis of $(V^{\otimes n})_e^G$.

In order to express the  action of the generator $\sigma_i \in \mathcal{B}_n$ in terms of $\{\Av_G(v): v\in \mathcal{S}_{reg}\}$, for each $\mathbf{x}=(x_1,\ldots,x_n)\in \R$ choose  $g_{\mathbf{x}}\in G$ such that $g_{\mathbf{x}}\Acts \sigma_i'(\mathbf{x})=\mathbf{y}$, where $\mathbf{y}\in \R$ and $\sigma_i'(\mathbf{x})= (x_1, \cdots , x_{i-1}, |x_i|x_{i+1}, x_i, \cdots ,  x_n)$. Hence there is  $\beta_{i,\mathbf{x}}\in \Tt$ such that $g_{\mathbf{x}}\Acts\sigma_i'(v_{x_1}\ot \cdots v_{x_n})=\beta_{i,\mathbf{x}}v_{y_1}\ot \cdots \ot v_{y_n}$.

Since the action of the generator $\sigma_i \in \mathcal{B}_n$ is given by \begin{align}
\sigma'(v_{x_1}\ot \cdots v_{x_n})&= \ThreeCocycle^{-1}(|x_1|\cdots |x_{i-1}|,|x_{i+1}|,|x_i|)\times \label{formula generador grupo de trenza}\\ 
&\lambda_X(|x_i|;x_{i+1})\ThreeCocycle(|x_1|\cdots |x_{i-1}|,|X_i|,|x_{i+1}|)\times \notag\\ & v_{x_1}\ot \cdots \ot v_{x_{i-1}}\ot v_{|x_i|x_{i+1}}\ot v_{x_i}\ot \cdots \ot  v_{x_n}.\notag
\end{align}
we have that 
\begin{align}
  \beta_{i,\mathbf{x}}&=  \ThreeCocycle^{-1}(|x_1|\cdots |x_{i-1}|,|x_{i+1}|,|x_i|)\times\label{def:beta} \\ 
&\lambda_X(|x_i|;x_{i+1})\ThreeCocycle(|x_1|\cdots |x_{i-1}|,|X_i|,|x_{i+1}|)\lambda_{X^n}(g_{\mathbf{x}};\sigma_i'(\mathbf{x})).\notag
\end{align}

\begin{thm}\label{thm:action in basis}
Let $G$ be a finite group, $\omega \in Z^3(G,\Tt)$ and $\bV=(V,X,(V_x)_{x\in X})$ be a monomial Yetter-Drinfeld module. Let $Y$ be the set of all regular elements in $X^n_e$ and $\R\subset Y$ a set of representatives of the $G$-orbits of $Y$.

\begin{enumerate}[leftmargin=*,label=\rm{(\alph*)}]
\item The projection $\pi :Y\to \R$ is map of $\mathcal{B}_n$-sets. The image of $\mathbf{x}\in \R$ by the generator $\sigma_i\in \mathcal{B}_n$ will be denoted by $\sigma_i\Acts \mathbf{x}$.
\item  Let $\mathcal{S}_{reg}= \{v_{x_1}\ot \cdots \ot v_{x_n}: (x_1,\ldots,x_n)\in \R \}$.  The action of the generator $\sigma_i \in \mathcal{B}_n$ in the basis $\{\Av_G(v_{\mathbf{x}}): \mathbf{x}\in \R\}$ is given by \[\sigma_i(\Av_G(v_{\mathbf{x}}))= \beta_{i,\mathbf{x}}\Av_G(v_{\sigma_i \Acts \mathbf{x}}),\]where $\beta_{i,\mathbf{x}}$ was defined in \eqref{def:beta}.
\end{enumerate}
\end{thm}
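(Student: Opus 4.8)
The plan is to verify directly that the formulas in Theorem~\ref{thm:action in basis} are forced by the definitions already established, essentially by unwinding how the monomial structure on $(V^{\otimes n})^G_e$ (provided by Proposition~\ref{Prop:Lemma Karp}) interacts with the braid generators $\sigma'_i$. First I would establish part~(a). The key point is that $\sigma'_i$, being a morphism in $\mathcal{Z}(\Vect_G^\ThreeCocycle)$, is $G$-linear and permutes the lines $V_{\mathbf{x}}$, hence induces a permutation of $X^n_e$ which, by Proposition~\ref{Prop:Lemma Karp}(b), preserves the set $Y$ of regular elements and commutes with the $G$-action; therefore it descends to a permutation of $Y/G$, and under the chosen identification $Y/G \cong \R$ this gives a well-defined map $\sigma_i \Acts -$ on $\R$. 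Checking that $\mathbf{x}\mapsto \sigma_i\Acts\mathbf{x}$ together with the braid relations makes $\R$ a $\mathcal{B}_n$-set amounts to observing that $\rho_n$ is already a group homomorphism (from Theorem~\ref{thm: YD monomial }) and that $\pi:Y\to\R$ is the quotient by the $G$-action, which is compatible with the $\mathcal{B}_n$-action because $G$-action and $\mathcal{B}_n$-action commute; so $\pi$ is a map of $\mathcal{B}_n$-sets essentially by construction.

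Next I would prove part~(b) by a direct computation tracing a basis vector through the maps. Start from $\Av_G(v_{\mathbf{x}})$ for $\mathbf{x}\in\R$. Since $\sigma'_i$ is $G$-linear, $\sigma_i(\Av_G(v_{\mathbf{x}})) = \Av_G(\sigma'_i(v_{\mathbf{x}}))$. Now apply formula~\eqref{formula generador grupo de trenza} to compute $\sigma'_i(v_{x_1}\ot\cdots\ot v_{x_n})$ explicitly: it equals a scalar — the product of the two $\ThreeCocycle$-factors and the $\lambda_X(|x_i|;x_{i+1})$ factor appearing in~\eqref{formula generador grupo de trenza} — times the basis vector $v_{\sigma'_i(\mathbf{x})}$ indexed by $\sigma'_i(\mathbf{x}) = (x_1,\ldots,x_{i-1},|x_i|x_{i+1},x_i,\ldots,x_n)\in X^n_e$. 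The point $\sigma'_i(\mathbf{x})$ lies in the $G$-orbit of the representative $\mathbf{y}:=\sigma_i\Acts\mathbf{x}\in\R$, and by the choice of $g_{\mathbf{x}}$ we have $g_{\mathbf{x}}\Acts\sigma'_i(\mathbf{x}) = \mathbf{y}$, so $g_{\mathbf{x}}\Acts v_{\sigma'_i(\mathbf{x})} = \lambda_{X^n}(g_{\mathbf{x}};\sigma'_i(\mathbf{x}))\,v_{\mathbf{y}}$ by the defining relation of $\lambda_{X^n}$ in~\eqref{definition of lambda x n}. Since $\Av_G$ is $G$-invariant, $\Av_G(v_{\sigma'_i(\mathbf{x})}) = \Av_G(g_{\mathbf{x}}\Acts v_{\sigma'_i(\mathbf{x})}) = \lambda_{X^n}(g_{\mathbf{x}};\sigma'_i(\mathbf{x}))\,\Av_G(v_{\mathbf{y}})$. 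Multiplying the scalars, the total coefficient in front of $\Av_G(v_{\mathbf{y}}) = \Av_G(v_{\sigma_i\Acts\mathbf{x}})$ is precisely the expression~\eqref{def:beta} defining $\beta_{i,\mathbf{x}}$, which is what we wanted.

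I would also remark, for completeness, that $\beta_{i,\mathbf{x}}$ is a well-defined root of unity: because $\mathbf{x}$ is regular and $\mathbf{y}\in\R$, any two valid choices of $g_{\mathbf{x}}$ differ by an element of $\stab_G(\mathbf{y})$ acting trivially on $V_{\mathbf{y}}$ (regularity of $\mathbf{y}$), so $\lambda_{X^n}(g_{\mathbf{x}};\sigma'_i(\mathbf{x}))$ does not depend on the choice; and $\Av_G(v_{\mathbf{y}})\neq 0$ by Proposition~\ref{Prop:Lemma Karp}(a). The main obstacle — and the only place where care is genuinely required — is bookkeeping: making sure the $\ThreeCocycle$-cocycle arguments in~\eqref{formula generador grupo de trenza} are exactly those obtained from unwinding the definitions~\eqref{generadores}, \eqref{definition of lambda x n} of $\sigma'_i$ via the associativity constraints and the braiding $c_{V,W}$, and that the shifted grading $|x_i||x_{i+1}|$ versus $|x_{i+1}||x_i|$ (these agree only after conjugation, since $|x_i|x_{i+1}$ has grading $|x_i||x_{i+1}||x_i|^{-1}$) is handled consistently in the product over $n-1$ nested $\ThreeCocycle(g;-,-)$ factors of $\lambda_{X^n}$. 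This is a routine but lengthy cocycle manipulation; I would relegate the verification of formula~\eqref{formula generador grupo de trenza} itself to a separate lemma or to the preceding subsection, and in the proof of Theorem~\ref{thm:action in basis} simply assemble the pieces as above.
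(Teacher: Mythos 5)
Your proposal is correct and follows essentially the same route as the paper: part (a) is deduced from Theorem \ref{thm: YD monomial } (the braid generators are $G$-linear monomial isomorphisms, hence descend to the regular orbits), and part (b) is the same computation $\sigma_i(\Av_G(v_{\mathbf{x}}))=\Av_G(\sigma_i(v_{\mathbf{x}}))=\Av_G(g_{\mathbf{x}}\Acts\sigma_i(v_{\mathbf{x}}))=\beta_{i,\mathbf{x}}\Av_G(v_{\sigma_i\Acts\mathbf{x}})$, with your version merely splitting $\beta_{i,\mathbf{x}}$ into its braiding and $\lambda_{X^n}$ factors. Your added remark on the independence of $\beta_{i,\mathbf{x}}$ from the choice of $g_{\mathbf{x}}$ (via regularity of the target representative) is a worthwhile point the paper leaves implicit.
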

\begin{proof}
The first part is consequence of Theorem \ref{thm: YD monomial }. 

For the second part, recall that the number $\beta_{i,\mathbf{x}}$ and the element $g_\mathbf{x}\in G$ are such that \[g_\mathbf{x}\Acts \sigma(v_{\mathbf{x}})=\beta_{i,\mathbf{x}}v_{\sigma_i\Acts \mathbf{x}}.\] Hence,
\begin{align*}
    \sigma_i(\Av_G(v_{\mathbf{x}}))&= \Av_G(\sigma_i(v_{\mathbf{x}}))\\
    &=g_\mathbf{x}\Acts \Av_G(\sigma_i(v_{\mathbf{x}}))\\
    &=\Av_G(g_\mathbf{x}\Acts\sigma_i(v_{\mathbf{x}}))\\
    &=\Av_G(\beta_{i,\mathbf{x}}v_{\sigma_i\Acts \mathbf{x}})\\
    &=\beta_{i,\mathbf{x}}\Av_G(v_{\sigma_i \Acts \mathbf{x}}).
\end{align*}
\end{proof}
\begin{example}\label{ejemplo permutation crossed g-sets}
Let $G$ be a finite group and $X$ be a left crossed $G$-set. Then the linearization $V_X:=\oplus_{x\in X}\mathbb{C}x$ is a (untwisted) Yetter-Drinfeld module in  $\mathcal{Z}(\Vect_G)$. Clearly $\lambda_X\equiv 1$, thus every element in $(X^{n})_e$ is regular. Hence the canonical projection 
\[(X^{ n})_e\to (X^{ n})_e//G,\]is an epimorphism of $\mathcal{B}_n$-sets. In other words, the linear representation of $\mathcal{B}_n$ on $\Hom_{\mathcal{Z}(\Vect_G)}(\mathbb{C}, V_X^{\otimes n} )$ is the linearization of the permutation action of $\mathcal{B}_n$ on $(X^{ n})_e//G$.
\end{example}
\section{Braid groups representations Associated to Lagrangian algebras}\label{Section: representation Lagrangian}

In this section we prove that every Lagrangian algebra $\mathcal{Z}(\Vect_G^\ThreeCocycle)$ as a canonical monomial structure. Then the results of Section \ref{Section: monomial twisted Yetter-Drinfeld} can be applied to Lagrangian algebras in $\mathcal{Z}(\Vect_G^\ThreeCocycle)$.

\subsection{Lagrangian algebras }

Following \cite[Corollary 3.17]{davydov2017lagrangian}, we will describe the Lagrangian algebra on $\twistedCenterG$ associated to a pair $(H,\gamma)$, where $H\subseteq G$, is a subgroup and $\gamma:H\times H\to \Tt$  a map such \begin{align*}
    \frac{\gamma(ab,c)\gamma(a,b)}{\gamma(a,bc)\gamma(b,c)}=\ThreeCocycle(a,b,c),&& a,b,c \in H.
\end{align*} 
Let  $\tCH=\oplus_{h\in H}\mathbb{C}e_h$   the group algebra of $H$ with the multiplication \begin{align*}
    e_{h_1}e_{h_2}=\gamma(h_1,h_2)e_{h_1h_2},&& h_1,h_2 \in H.
\end{align*} The vector space $\tCH=\oplus_{h\in H}\mathbb{C}e_h$, is a commutative algebra in $\mathcal{Z}(\Vect_H^\ThreeCocycle)$, where the  $H$-action 
is given by
\begin{align*}
    h_1\Acts e_{h_2}=\eps(h_1,h_2)e_{h_1h_2h_1^{-1}},&& \eps(h_1,h_2):=\frac{\gamma(h_1,h_2)}{\gamma(\prescript{h_1}{}{h_2},h_1)},&& h_1,h_2 \in H,
\end{align*}and grading $|e_h|=h$ for all $h\in H$.

Let $\operatorname{Map}(G, \tCH)$ be
the vector space of all set-theoretic maps from $G$ to  $\tCH$. With the grading given by 

\begin{align*}
\vert a\vert&=f &\Leftrightarrow &&\forall x\in G \quad \vert a(x)\vert =x^{-1}fx,   
\end{align*}
and twisted $G$-action  

\begin{align*}\label{G-action}
(g\Acts a)(x):=\ThreeCocycle(x^{-1},g^{-1}; |a|)^{-1}a(g^{-1}\Acts x), && g,x\in G.
\end{align*}
$\operatorname{Map}(G, \tCH)$ is twisted Yetter-Drinfeld module. 

The Lagrangian algebra $L(H,\gamma)$ is the Yetter-Drinfeld submodule 
\begin{align*}
    L(H,\gamma):=\{a\in \textrm{Maps}(G,\tCH)\vert \, a(xh)=\ThreeCocycle(h^{-1},x^{-1}; |a|)h^{-1}\Acts a(g) \},
\end{align*}see \cite{davydov2017lagrangian} for more details.

\subsection{Monomial structure of the Lagrangian algebras $L(H,\gamma)$}
In this section we will proved that every Lagrangian algebra of the form $L(H,\gamma)$ has a canonical monomial structure.

Let $G$ be a group and $H\subset G$ a subgroup. We can regard $G\times H$ as a left $H$-set  with actions given $h\Acts (g,h')=(gh^{-1},hh'h^{-1})$. Then we can consider the set of $H$-orbits  that we will denote by $G \times_H H$. The set  $G \times_H H$ is equipped with a left $G$-action given by left multiplication on the first component.

\begin{defi}
Let $L(H,\gamma)$ be a Lagrangian. For each $g\in G$, $f\in H$, define $\chi_{g,f}\in L(H,\gamma)$ by 

\begin{equation}
\chi_{g,f}(x)=
\begin{cases}
0, & x\notin gH \\
\ThreeCocycle(h^{-1},g^{-1}; \prescript{g}{}{f})\epsilon(h^{-1},f)e_{hfh^{-1}}, & x=gh, \text{ where } h\in H.
\end{cases}
\end{equation}
\end{defi}
\begin{rem}
The function $\chi_{g,h}$ can be  characterized as the unique map in $L(H,\gamma)$ with support $gH$ and such that $\chi_{g,h}(g)=e_h$.
\end{rem}
\begin{lem}\label{Lemma: equaciones}
Let $L(H,\gamma)$ be a Lagrangian algebra in $\twistedCenterG$. Then 
 \begin{align}\label{equ1: chi}
\chi_{gh,f}=\ThreeCocycle(h,(gh)^{-1}; \prescript{gh}{}{f})\epsilon(h,f)\chi_{g,\prescript{h}{}{f}},&& g\in G, f,h\in H.
\end{align}
\begin{align}\label{equ2: chi}
    l\Acts \chi_{g,f}=\ThreeCocycle((lg)^{-1},l^{-1};\prescript{g}{}{f})\chi_{lg,f},&& g,l \in G, h\in H.
\end{align}
\end{lem}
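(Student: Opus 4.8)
The plan is to verify both identities directly from the defining formula for $\chi_{g,f}$, using the characterization given in the Remark: $\chi_{g,f}$ is the unique element of $L(H,\gamma)$ with support $gH$ and value $e_f$ at $g$. This means that to prove an equality $\chi_{gh,f} = c\cdot \chi_{g,{}^h f}$ for some scalar $c\in\Tt$, it suffices to check (i) that both sides have the same support, and (ii) that they agree at a single convenient point of that support. This reduces each identity to a short evaluation rather than a full function-by-function check.

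For \eqref{equ1: chi}: both $\chi_{gh,f}$ and $\chi_{g,{}^h f}$ have support $ghH = gH$ (since $h\in H$), so (i) is immediate. For (ii), I would evaluate both sides at the point $g\in gH$. By the Remark, $\chi_{g,{}^h f}(g) = e_{{}^h f}$. On the other hand, to compute $\chi_{gh,f}(g)$ I write $g = (gh)h^{-1}$, so in the notation of the defining case formula $x = (gh)h'$ with $h' = h^{-1}\in H$; this gives $\chi_{gh,f}(g) = \ThreeCocycle((h^{-1})^{-1},(gh)^{-1};{}^{gh}f)\,\epsilon(h^{-1},f)\,e_{h^{-1}f h}$. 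Now $e_{h^{-1}fh} = e_{{}^{h^{-1}}f}$ and I would use the defining relation $h'\Acts e_{f} = \epsilon(h',f)e_{{}^{h'}f}$ to rewrite $e_{{}^{h^{-1}}f}$ in terms of $e_{{}^h f}$ — more precisely, since ${}^h({}^{h^{-1}}f) = f$, applying $h\Acts(-)$ relates $e_{{}^{h^{-1}}f}$ and $e_{f}$, but what I actually need is to compare with $e_{{}^h f}$, so I would instead just track the $\epsilon$ and $\omega$ cocycle factors carefully and show the accumulated scalar equals $\ThreeCocycle(h,(gh)^{-1};{}^{gh}f)\epsilon(h,f)$. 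The cocycle bookkeeping here — relating $\epsilon(h^{-1},f)$, the $\omega(\cdot;\cdot,\cdot)$ terms with arguments $h$ versus $h^{-1}$, and the action of $e_h e_{h^{-1}}$ via $\gamma$ — is the one genuinely fiddly computation, and I expect it to be the main obstacle: it amounts to invoking the 2-cochain relation $\delta\gamma = \omega|_H$ and the cocycle identities for the mixed expressions $\omega(g;f,h)$ and $\omega(g,g';h)$ a couple of times.

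For \eqref{equ2: chi}: again I use the support-plus-one-point strategy. The $G$-action sends a function supported on $gH$ to one supported on $lgH$ (from the formula $(l\Acts a)(x) = \ThreeCocycle(\cdots)a(l^{-1}\Acts x)$, which is nonzero exactly when $l^{-1}x\in gH$, i.e. $x\in lgH$), matching the support of $\chi_{lg,f}$, so (i) holds. For (ii) I evaluate $(l\Acts \chi_{g,f})$ at $x = lg$: by the $G$-action formula this equals $\ThreeCocycle((lg)^{-1},l^{-1};|\chi_{g,f}|)^{-1}\chi_{g,f}(l^{-1}\cdot lg) = \ThreeCocycle((lg)^{-1},l^{-1};{}^g f)^{-1}e_f$, using that the grading of $\chi_{g,f}$ is ${}^g f$ and that $l^{-1}\Acts(lg) = g$ up to the appropriate interpretation of the $G$-action on $G$ (here the relevant action is just left translation, so $l^{-1}\Acts lg = g$). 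Meanwhile $\chi_{lg,f}(lg) = e_f$ by the Remark. Comparing, we get the scalar $\ThreeCocycle((lg)^{-1},l^{-1};{}^g f)^{-1}$, which is exactly the claimed coefficient once one is careful about whether the exponent is $+1$ or $-1$ in the paper's sign convention — I would double-check the direction of the $\omega(x^{-1},g^{-1};|a|)^{-1}$ factor in the definition of the $G$-action against the statement. If the signs in the displayed statement and the $G$-action definition are consistent, \eqref{equ2: chi} falls out with essentially no cocycle manipulation at all; the only subtlety is confirming that $|\chi_{g,f}| = {}^g f$, which itself follows from the grading condition $|a(x)| = x^{-1}|a|x$ applied at $x=g$ together with $|\chi_{g,f}(g)| = |e_f| = f$.

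In summary: identity \eqref{equ2: chi} is a one-line evaluation once the grading $|\chi_{g,f}|={}^g f$ is noted, and identity \eqref{equ1: chi} requires the same evaluation strategy but with a nontrivial cocycle computation — repeated use of $\delta\gamma = \omega|_{H^{\times 3}}$ and the defining cocycle identities for $\omega(g,g';h)$ and $\omega(g;f,h)$ — to collapse the scalar picked up when rewriting $e_{h^{-1}fh}$ and the mixed $\omega$-terms from arguments involving $h^{-1}$ into the stated form. That cocycle collapse is where I expect essentially all the work to be.
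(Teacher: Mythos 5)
Your strategy is exactly the paper's: an element of $L(H,\gamma)$ supported on a single left coset is determined by its value at one point of that coset (the content of the Remark), so one checks that both sides have the same support and then evaluates at one convenient point. Your treatment of \eqref{equ2: chi} is complete and coincides with the paper's, including the caveat you raise about the exponent on the $\ThreeCocycle$-factor: the definition of the $G$-action carries $\ThreeCocycle(x^{-1},g^{-1};|a|)^{-1}$ while the stated identity (and the paper's own one-line computation) has no inverse, so the discrepancy you flag is a real sign-convention inconsistency in the source rather than a defect of your argument.

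For \eqref{equ1: chi}, however, you stop precisely at the step where all the content lives: you set up the evaluation of $\chi_{gh,f}$ at $g=(gh)h^{-1}$ correctly, but then only describe, without carrying out, a ``cocycle collapse'' relating $e_{h^{-1}fh}$ to $e_{\prescript{h}{}{f}}$ via repeated use of $\delta\gamma=\ThreeCocycle|_{H^{\times 3}}$. As written this leaves the scalar comparison unproved, and that is the one genuine gap. In fact no such collapse is needed: the anticipated difficulty is an artifact of the displayed case formula for $\chi_{g,f}$, whose entry $e_{hfh^{-1}}$ at the point $x=gh$ is inconsistent with the grading constraint $|a(x)|=x^{-1}|a|x$ (which forces $|\chi_{g,f}(gh)|=h^{-1}fh$). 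If one instead applies the defining relation $a(xh')=\ThreeCocycle(h'^{-1},x^{-1};|a|)\,h'^{-1}\Acts a(x)$ of $L(H,\gamma)$ directly with $x=gh$ and $h'=h^{-1}$, one gets at once $\chi_{gh,f}(g)=\ThreeCocycle(h,(gh)^{-1};\prescript{gh}{}{f})\,h\Acts e_f=\ThreeCocycle(h,(gh)^{-1};\prescript{gh}{}{f})\,\eps(h,f)\,e_{hfh^{-1}}$, and $e_{hfh^{-1}}=e_{\prescript{h}{}{f}}=\chi_{g,\prescript{h}{}{f}}(g)$, which is exactly the claimed identity with no further cocycle manipulation. (Note also the small substitution slip in your evaluation: with offset $h'=h^{-1}$ the $\eps$-factor coming from the case formula is $\eps(h'^{-1},f)=\eps(h,f)$, not $\eps(h^{-1},f)$.) So the method is the right one and identical to the paper's; what is missing is the realization that the one-point evaluation already closes the argument once the definitions are read consistently.
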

\begin{proof}
\eqref{equ1: chi}. Since the supports of  $\chi_{gh,f}$ and $\chi_{g,^hf}$ are $gH$, and

\begin{align*}
\chi_{gh,f}(g) &= \chi_{gh,f}(ghh^{-1})\\
&= \ThreeCocycle(h,(gh)^{-1}; \prescript{gh}{}{f})\epsilon(h,f)\chi_{g,\prescript{h}{}{f}}(g),
\end{align*} we obtain \eqref{equ1: chi}.

\eqref{equ2: chi}. By the definition of the action of $G$ we have \begin{align*}
    l\Acts \chi_{g,f}(lg)&=\ThreeCocycle((lg)^{-1},l^{-1};\prescript{g}{}{f})\chi_{g,f}(g)\\
    &=\ThreeCocycle((lg)^{-1},l^{-1};\prescript{g}{}{f})e_f.
\end{align*}Since $l\rhd \chi_{g,f}$ and $\chi_{gl,f}$ are supported in $glH$, we get \eqref{equ2: chi}.
\end{proof}
It follows from Lemma \ref{Lemma: equaciones} that $\mathbb{C}\chi_{gh,\prescript{h}{}{f}}=\mathbb{C}\chi_{g,f}$. Then for any $(g,h)\in G\times_H H$ the space $\mathbb{C}\chi_{g,f}$ is well defined.
\begin{thm}\label{thm: Lagrangian are monomial}
Let $L(H,\gamma)$ be a Lagrangian algebra  in $\twistedCenterG$. Then $L(H,\gamma)$ with the decomposition \[L(H,\gamma)=\bigoplus_{(g,h)\in G\times_H H}\mathbb{C}\chi_{g,h}\] is a monomial twisted Yetter-Drinfeld module. 
\end{thm}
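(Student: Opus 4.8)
The plan is to verify the three defining conditions of a monomial twisted Yetter-Drinfeld module for the triple $\bigl(L(H,\gamma),\,G\times_H H,\,(\mathbb{C}\chi_{g,h})_{(g,h)\in G\times_H H}\bigr)$: namely, that the one-dimensional subspaces $\mathbb{C}\chi_{g,h}$ are well-defined and exhaust $L(H,\gamma)$ as a direct sum, that each $\mathbb{C}\chi_{g,h}$ is $G$-homogeneous, and that the twisted $G$-action permutes the $\mathbb{C}\chi_{g,h}$'s. The well-definedness of $\mathbb{C}\chi_{g,h}$ has already been noted from Lemma \ref{Lemma: equaciones}, since \eqref{equ1: chi} gives $\mathbb{C}\chi_{gh,\prescript{h}{}{f}}=\mathbb{C}\chi_{g,f}$, which is exactly the relation identifying elements of the orbit set $G\times_H H$.

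First I would establish the direct-sum decomposition. Using the remark that $\chi_{g,f}$ is the unique map in $L(H,\gamma)$ supported on the coset $gH$ with $\chi_{g,f}(g)=e_f$, I would argue that for a fixed coset $gH$, the subspace of $L(H,\gamma)$ consisting of maps supported on $gH$ is isomorphic (via evaluation at $g$) to $\tCH=\bigoplus_{f\in H}\mathbb{C}e_f$, because the defining condition $a(gh)=\ThreeCocycle(h^{-1},g^{-1};|a|)h^{-1}\Acts a(g)$ of $L(H,\gamma)$ determines $a$ on the whole coset from its value at $g$, and conversely any value in $\tCH$ extends; hence this subspace has basis $\{\chi_{g,f}:f\in H\}$. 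Summing over a set of coset representatives and then reorganizing the index $(g,f)$ into $H$-orbits $G\times_H H$ (using \eqref{equ1: chi} to see that $\chi_{g,f}$ and $\chi_{gh,\prescript{h^{-1}}{}{f}}$, up to nonzero scalar, span the same line) gives $L(H,\gamma)=\bigoplus_{(g,h)\in G\times_H H}\mathbb{C}\chi_{g,h}$, with each summand one-dimensional. The $G$-homogeneity of $\mathbb{C}\chi_{g,h}$ is immediate from the grading formula: $\chi_{g,h}$ is supported on $gH$ and its value at $g$ is $e_h$ with $|e_h|=h$, so by $|a|=f\Leftrightarrow |a(x)|=x^{-1}fx$ one reads off $|\chi_{g,h}|=\prescript{g}{}{h}=ghg^{-1}$, a single element of $G$.

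Finally, the permutation property of the action is precisely \eqref{equ2: chi} of Lemma \ref{Lemma: equaciones}: $l\Acts\chi_{g,f}=\ThreeCocycle((lg)^{-1},l^{-1};\prescript{g}{}{f})\,\chi_{lg,f}$, so $l$ sends the line $\mathbb{C}\chi_{g,f}$ to the line $\mathbb{C}\chi_{lg,f}$, and since left multiplication on the first coordinate is exactly the $G$-action on $G\times_H H$, this is a genuine permutation of the indexing set. I would also record that the induced permutation action of $G$ on $G\times_H H$ is the stated one, so that Theorems \ref{thm: YD monomial } and \ref{thm:action in basis} apply. The main obstacle I anticipate is not any single step but the bookkeeping in the direct-sum claim: one must be careful that the map $G\times H\to \{\text{lines in }L(H,\gamma)\}$, $(g,f)\mapsto \mathbb{C}\chi_{g,f}$, is constant exactly on $H$-orbits and injective on the quotient — the first follows from \eqref{equ1: chi}, and injectivity follows from comparing supports (different cosets $gH$) together with linear independence of the $\chi_{g,f}$ for fixed $g$ (which reduces to linear independence of the $e_f$ in $\tCH$). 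Once these are in place the theorem follows by simply invoking the definition of monomial twisted Yetter-Drinfeld module.
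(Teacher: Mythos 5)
Your proposal is correct and follows essentially the same route as the paper: establish the direct sum by separating supports across distinct cosets and using linear independence within a fixed coset, verify spanning by reconstructing any $a\in L(H,\gamma)$ from its values on coset representatives, and then read off homogeneity from $|\chi_{g,h}|=ghg^{-1}$ and the permutation property from \eqref{equ2: chi}. The only cosmetic difference is that for independence of $\{\chi_{g,f}\}_{f\in H}$ with $g$ fixed you invoke evaluation at $g$ and independence of the $e_f$ in $\tCH$, whereas the paper observes that these elements have pairwise distinct gradings; both are valid.
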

\begin{proof}
First we will check that in fact the sum $\sum_{(g,h)\in G\times_H H}\mathbb{C}\chi_{g,h},$ is direct. Since $\textrm{supp}(\chi_{g,f})=gH$, we have that  $\chi_{g,f}$ and $\chi_{g^\prime,f}$ are linearly independent if $gH\neq g^\prime H$. Hence it is suffices to check linear independence of the collections $\{\chi_{g,f}\}_{f\in H}$, with $g$ fixed. But if $f\neq f^\prime$, $\vert \chi_{r,f}\vert \neq \vert \chi_{r,f^\prime}\vert$. It follows that the sum $\sum_{(g,h)\in G\times_H H}\mathbb{C}\chi_{g,h}$ is direct.

In order to see that $L(H,\gamma)=\sum_{(g,h)\in G\times_H H}\mathbb{C}\chi_{g,h}$, fix $\R\subset G$ a set of representative of the left coset of $H$ in $G$. Let $a\in L(H,\gamma)$. For each $r\in \R$, suppose 
\begin{equation}
a(r)=\sum_{f\in H} \lambda_{r,f} e_f. 
\end{equation}
Then we have
\begin{equation}
a=\sum_{r\in \R,f\in H} \lambda_{r,f}\chi_{r,f}\in \sum_{(g,h)\in G\times_H H}\mathbb{C}\chi_{g,h}.
\end{equation}

By \eqref{equ2: chi}  and the fact that $|\chi_{g,f}|=gfg^{-1}$, we obtain that $L(H,\gamma)$ is a monomial twisted Yetter-Drinfeld module.
\end{proof}

\begin{cor}\label{cor: Lagrangian are monomial }
Let $G$ be a finite group, $\omega \in Z^3(G,\Tt)$. If $L(H,\gamma)$ is a Lagrangian algebra in $\mathcal{Z}(\Vect_G^\ThreeCocycle)$, then
\begin{enumerate}[leftmargin=*,label=\rm{(\alph*)}]
    \item  the action of $\mathcal{B}_n$ on $\Hom_{\mathcal{Z}(\Vect_G^\ThreeCocycle)}(\mathbb{C}, L(H,\gamma)^{\otimes n} )$ is monomial,
    \item the dimension of $\Hom_{\mathcal{Z}(\Vect_G^\ThreeCocycle)}(\mathbb{C}, L(H,\gamma)^{\otimes n} )$ is equal to the number of regular $G$-orbits under the monomial action of $G$ on \[(G\times_H H)^{\times n})_e:=\{((g_1,h_1),\ldots,(g_n,h_n)): g_1h_1g_1^{-1}g_2h_2g_2^{-1}\cdots g_nh_ng_n^{-1}=e\}.\]
\end{enumerate}
\end{cor}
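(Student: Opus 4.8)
The plan is to obtain the corollary as an immediate application of Theorem~\ref{thm: YD monomial} to the monomial twisted Yetter-Drinfeld module furnished by Theorem~\ref{thm: Lagrangian are monomial}. Concretely, Theorem~\ref{thm: Lagrangian are monomial} asserts that
\[
\bV = \bigl(L(H,\gamma),\ G\times_H H,\ (\mathbb{C}\chi_{g,h})_{(g,h)\in G\times_H H}\bigr)
\]
is a monomial twisted Yetter-Drinfeld module in $\mathcal{Z}(\Vect_G^\ThreeCocycle)$. So one simply specializes Theorem~\ref{thm: YD monomial} to $V = L(H,\gamma)$ and $X = G\times_H H$: part~(a) of the theorem gives that the $\mathcal{B}_n$-action on $\Hom_{\mathcal{Z}(\Vect_G^\ThreeCocycle)}(\mathbb{C}, L(H,\gamma)^{\otimes n})$ is monomial, which is part~(a) of the corollary, and part~(b) of the theorem gives that $\dim \Hom_{\mathcal{Z}(\Vect_G^\ThreeCocycle)}(\mathbb{C}, L(H,\gamma)^{\otimes n})$ equals the number of regular $G$-orbits on $(X^n)_e$.

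The only remaining task is to identify the abstract set $(X^n)_e$ of Theorem~\ref{thm: YD monomial} with the explicit set in the statement. For this I would invoke the computation of the grading carried out at the end of the proof of Theorem~\ref{thm: Lagrangian are monomial}: by~\eqref{equ2: chi} the line $\mathbb{C}\chi_{g,f}$ is $G$-homogeneous of degree $|\chi_{g,f}| = gfg^{-1}$, and the crossed $G$-set structure on $G\times_H H$ induced by $\bV$ has underlying $G$-action the left multiplication on the first coordinate (again read off from~\eqref{equ2: chi}, since $l\Acts \mathbb{C}\chi_{g,f} = \mathbb{C}\chi_{lg,f}$) and grading $|(g,h)| = ghg^{-1}$. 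Consequently an $n$-tuple $((g_1,h_1),\dots,(g_n,h_n))$ lies in $(X^n)_e$ exactly when $|(g_1,h_1)|\cdots|(g_n,h_n)| = g_1h_1g_1^{-1}g_2h_2g_2^{-1}\cdots g_nh_ng_n^{-1} = e$, which is precisely the set named in the corollary. Plugging this description into Theorem~\ref{thm: YD monomial}(b) finishes part~(b).

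Since each step is a direct citation of an already-proved statement, there is essentially no genuine obstacle here; the one place to be slightly careful is the compatibility check that the $G$-action on $G\times_H H$ coming from the monomial structure of $L(H,\gamma)$ coincides with the "left multiplication on the first component" action used to define $G\times_H H$ as a $G$-set in the first place — but this is exactly what~\eqref{equ2: chi} records, so it costs only a line.
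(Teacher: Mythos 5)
Your proposal is correct and is exactly the paper's argument: the authors likewise deduce the corollary by combining Theorem~\ref{thm: Lagrangian are monomial} with Theorem~\ref{thm: YD monomial }. You merely spell out the (routine) identification of $(X^n)_e$ via the grading $|\chi_{g,f}|=gfg^{-1}$, which the paper leaves implicit.
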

\begin{proof}
Follows from Theorem \ref{thm: Lagrangian are monomial} and Theorem \ref{thm: YD monomial }.
\end{proof}

We will fix a set of representatives of the left cosets of $G$ in $H$, $\R \subset G$. Thus
every element $g \in G$ has a unique factorization $g = rh$, $h \in H, r \in \R$. We assume $e \in \R$. The uniqueness of the factorization $G = \R H$ implies that there are well defined maps
\begin{align*}
\Acts: G\times \R\ra \R,&& \kappa:G\times \R\ra H, 
\end{align*}
determined by the condition
\begin{align*}
    gr=(g\Acts r)\kappa(g,h), && g\in G, r\in \R.
\end{align*}

As crossed $G$-set we can identify $G\times_H H$ with  $\R\times H$ with action 
\begin{align*}
    g\Acts(r,h):= (g\Acts r,^{\kappa(g,r)}h),&&  r\in \R,\  h\in H,\  g\in G,
\end{align*}
and grading map 
\begin{align*}
|-|:\R\times H&\to G\\
(r,h)&\mapsto rhr^{-1}.
\end{align*}

It follows from Theorem \eqref{thm: Lagrangian are monomial} that $B_\R:=\{\chi_{r,h} | \, r\in \R, \, h\in H\}$ is a basis for $L(H,\gamma)$. 

In order to apply the results of Subsection \ref{subsection:monomial basis}, we only need to compute the map $\lambda_{\R\times H}:G\times (\R\times H)\to \Tt$, such that  

\begin{align*}
    g \Acts \chi_{r,h}= \lambda_{\R\times H}(g;r,h) \chi_{g\Acts r,\prescript{\kappa(g,r)}{}{h}}, && g\in G, r\in \R, h\in H.
\end{align*}

Using Lemma \ref{Lemma: equaciones}
we obtain that 
\begin{equation}\label{LambdaCoef}
\lambda_{\R\times H}(g;r,h)=
 \ThreeCocycle((g r)^{-1},g^{-1}; \prescript{r}{}{h})
  \ThreeCocycle(\kappa(g,r),(gr)^{-1}; \prescript{gr}{}{h})
   \eps(\kappa(g,r),h),
\end{equation}for all $g\in G, r\in \R, h\in H$.

By \eqref{condicion regular},we have that an element $t=((r_1,h_1),\ldots,(r_n,h_n))\in (\R\times H)^n_e$ is regular if and only if 
\begin{align}\label{regular lagrangian}
 \lambda_{(\R\times H)^n}(g;(r_1,h_1),\ldots,(r_n,h_n))=1,&&  \text{for all } g\in \bigcap_{i=1}^{n} r_i^{-1}C_H(h_i)r_i
\end{align}
where $\lambda_{(\R\times H)^n}$ was defined in \eqref{definition of lambda x n} in function of $\lambda_{\R\times H}$ and $\ThreeCocycle$.

\subsection{Applications and examples}
In this last section we present some application of the results of the previous section.

\subsubsection{Central Subgroups}
\begin{prop}
Let $G$ be a finite group and  $L(H,\gamma)$ a Lagrangian algebra in $\mathcal{Z}(\Vect_G)$, where  $H\subset G$ is a central subgroup.  Then
 \[\dim\Big (\Hom_{\mathcal{Z}(\Vect_G)}(\mathbb{C},L(H,\gamma)^{\otimes n})\Big ) =|G|^{n-1}.\]  Moreover, the representation of $\B_n$ is actually a representation of $\SymGp_n$.
\end{prop}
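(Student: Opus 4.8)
The plan is to apply Corollary \ref{cor: Lagrangian are monomial } in the untwisted case $\ThreeCocycle = 1$, so that all the $\ThreeCocycle$-factors drop out and the only obstruction to regularity is the coboundary-type term $\eps$ coming from $\gamma$. Since $H$ is central, I expect $\gamma$ to behave like a bilinear form (or at least $\eps$ to simplify drastically), which should make \emph{every} element of $(G\times_H H)^{\times n}_e$ regular; then the dimension equals the number of $G$-orbits and the braiding reduces to a genuine permutation action, which is a representation of $\SymGp_n$ because the square of a transposition acts trivially.

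First I would unwind the data when $H$ is central: the conjugation action of $G$ on $H$ is trivial, so $\prescript{g}{}{h} = h$ for all $g \in G, h\in H$; the crossed $G$-set $G\times_H H$ becomes $(G/H)\times H$ with $G$ acting only on the first coordinate by left translation, and the grading map sends $(gH,h)\mapsto h$ (since $ghg^{-1}=h$). Consequently $(G\times_H H)^{\times n}_e = \{((g_1H,h_1),\dots,(g_nH,h_n)) : h_1h_2\cdots h_n = e\}$. Next I would compute $\lambda_{\R\times H}$ from \eqref{LambdaCoef} with $\ThreeCocycle\equiv 1$: the two $\ThreeCocycle$-terms are $1$, so $\lambda_{\R\times H}(g;r,h) = \eps(\kappa(g,r),h) = \gamma(\kappa(g,r),h)/\gamma(h,\kappa(g,r))$, using centrality to drop the conjugation in $\prescript{\kappa}{}{h}$. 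The key point is that $\delta\gamma = \ThreeCocycle|_H = 1$ forces $\gamma$ to be a 2-cocycle on the \emph{abelian} group $H$, so its antisymmetrization $\beta(a,b):=\gamma(a,b)/\gamma(b,a)$ is a well-defined alternating bicharacter $H\times H\to\Tt$; moreover, because $L(H,\gamma)$ is Lagrangian, this bicharacter must be \emph{nondegenerate} (equivalently $H$ self-dual), and I would either cite this from \cite{davydov2017lagrangian, davydov2010modular} or derive it from the dimension count $\dim L(H,\gamma) = |G| = |G/H|\cdot|H|$ which forces $|H|^2 = |\text{something}|$... actually the cleaner route: the condition that the algebra be \emph{connected/Lagrangian} in the untwisted Drinfeld center $\mathcal{Z}(\Vect_G)$ pins down that $\gamma$ restricted appropriately is nondegenerate.

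Then I would verify regularity of every $t = ((r_1,h_1),\dots,(r_n,h_n)) \in (\R\times H)^n_e$ via \eqref{regular lagrangian}. Here $C_H(h_i) = H$ since $H$ is abelian, so the intersection $\bigcap_i r_i^{-1}C_H(h_i)r_i = \bigcap_i r_i^{-1}Hr_i = H$ (again $H$ central, so $r_i^{-1}Hr_i = H$). So I must show $\lambda_{(\R\times H)^n}(g;t) = 1$ for all $g\in H$. But for $g\in H$ one has $g\Acts r = r$ and $\kappa(g,r) = r^{-1}gr = g$ (centrality!), so $\lambda_{\R\times H}(g;r_i,h_i) = \beta$-type term $= \gamma(g,h_i)/\gamma(h_i,g)$, and the product formula \eqref{definition of lambda x n} (all $\ThreeCocycle$-factors $=1$) gives $\lambda_{(\R\times H)^n}(g;t) = \prod_{i=1}^n \gamma(g,h_i)/\gamma(h_i,g) = \prod_i \beta(g,h_i) = \beta(g,\,h_1h_2\cdots h_n) = \beta(g,e) = 1$ using bimultiplicativity of $\beta$ and the constraint $h_1\cdots h_n = e$. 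Hence \emph{all} elements are regular, so by Corollary \ref{cor: Lagrangian are monomial }(b) the dimension is the number of $G$-orbits on $(G/H \times H)^n_e$; since $G$ acts freely and transitively on each $G/H$-factor diagonally and trivially on the $H$-coordinates subject to one product constraint, the orbit count is $|G/H|^{n-1}\cdot|H|^n = |G|^{n-1}\cdot|H|^{n}/|H|^{n-1}$... let me recount: the set has size $|G/H|^n\cdot|H|^{n-1}$, $G$ acts by simultaneous left translation on the $n$ copies of $G/H$ (free on the diagonal modulo nothing — actually the diagonal $G$-action on $(G/H)^n$ has orbits counted by $(G/H)^{n-1}$ after fixing the first coordinate, each orbit of size $|G/H|$), giving $|G/H|^{n-1}\cdot |H|^{n-1} = |G|^{n-1}$ orbits. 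Finally, since $\lambda\equiv 1$ on all relevant stabilizers, each $\beta_{i,\mathbf{x}}$ in \eqref{def:beta} equals $1$ (all factors are $\ThreeCocycle$- or $\lambda$-terms that vanish to $1$ in the untwisted central case), so by Theorem \ref{thm:action in basis} the generators $\sigma_i$ act by honest permutations of the basis $\{\Av_G(v_{\mathbf{x}})\}$; and the induced permutation action factors through $\SymGp_n$ because $c_{X,X}^2 = \id$ when the grading is central (the double braiding on $(G/H\times H)^{\times 2}$ multiplies by $\beta(h_1,h_2)\beta(h_2,h_1)^{-1}\cdots$, which is trivial here). The main obstacle I anticipate is making the nondegeneracy/self-duality of $\beta$ (needed only if one wants the sharper structural statements) precise and correctly bookkeeping the orbit count; the regularity computation itself is the clean heart of the argument and should go through exactly as sketched.
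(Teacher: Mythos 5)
Your proposal is correct and follows essentially the same route as the paper: use centrality to reduce the crossed $G$-set to $(G/H)\times H$ with $G$ acting only by translation on the first factor, observe that $\eps(a,b)=\gamma(a,b)/\gamma(b,a)$ is an alternating bicharacter on the abelian group $H$ so that $\prod_i\eps(g,h_i)=\eps(g,h_1\cdots h_n)=\eps(g,e)=1$ makes every element regular, count orbits as $[G:H]^{n-1}|H|^{n-1}=|G|^{n-1}$, and kill the double braiding via $\eps(h_i,h_{i+1})\eps(h_{i+1},h_i)=1$. One caveat: your side-claim that each $\beta_{i,\mathbf{x}}=1$ (so that the $\sigma_i$ act by honest permutations) is false in general, since $\lambda_X(|x_i|;x_{i+1})=\eps(h_i,h_{i+1})$ need not be trivial (e.g.\ $G=H=(\ZZ/2)^2$ with a nondegenerate alternating $\eps$); the representation of $\SymGp_n$ is monomial, not a permutation representation, but this does not affect the stated conclusion because the factorization through $\SymGp_n$ only needs $\sigma_i'^2=\id$, which your double-braiding computation (and the paper's) supplies. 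The digression on nondegeneracy of $\eps$ is not needed anywhere in the argument.
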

\begin{proof}
Since $H$ is a central subgroup, $g\Acts (r,h) = (g\Acts r, h)$ and 
\begin{equation*}
    |\chi_{r_1,h_1} \otimes \dots \otimes \chi_{r_k,h_k}| = h_1 \cdots h_k,
\end{equation*}
for any  $ r_1,\dots,r_k \in R, h_1, \dots, h_n \in H$.  
Hence,
\begin{align*}
    |(\R\times H)^n_e|&=|(\R^n/G)| |H^{n-1}|\\
    &= [G:H]^n|H|^{n-1}\\
    &= |G|^{n-1}.
    \end{align*} 
 
To determine the number of orbits, notice that $\eps: H\times H\to \Tt$ is a bicharacter such that $\eps(h_1,h_2)\eps(h_2,h_1)=1$. Then by equation \eqref{regular lagrangian} an element \[\Big( (r_1,h_1),\ldots , (r_n,h_n)\Big ) \in (\R\times H)_e^n\] is regular if and only if 
\begin{align*}
    \prod_{i=1}^n\eps(h,h_i)=1, && \text{for all } h\in H.
\end{align*}
But $\prod_{i=1}^n \eps(h,h_i) = \eps(h,h_1\cdots h_n)   = \eps(h', e) = 1.$ Hence every element is regular. By Corollary \ref{cor: Lagrangian are monomial } the dimension of $\Hom_{\mathcal{Z}(\Vect_G)}(\mathbb{C},L(H,\gamma)^{\otimes n})$  is  $|G|^{n-1}$.

Finally, using equation \eqref{formula generador grupo de trenza}, we  see that 
\begin{align*}
\sigma'_i\circ\sigma'_i (\chi_{r_1,h_1}\otimes\cdots \ot \chi_{r_n,h_n})&= \eps(h_i,h_{i+1})\eps(h_{i+1},h_{i}) (\chi_{r_1,h_1}\otimes\cdots \ot \chi_{r_n,h_n})\\ 
&= \chi_{r_1,h_1}\otimes\cdots \ot \chi_{r_n,h_n}.
\end{align*}
Hence  representation of $\mathcal{B}_n$ factors as a representation of $\SymGp_n$.

\end{proof}

\subsubsection{Lagrangian algebra of the form $L(H,1)$} 
The Lagrangian algebras $L(H,1)$ as an object in $\mathcal{Z}(\Vect_G)$ are completely determined by the crossed $G$-set $G\times_H$, and the monomial representation $\Hom(\mathbb{C}, L(H,1)^{\otimes})$ is a permutation representation, see Example \ref{ejemplo permutation crossed g-sets}.  Let us see some extreme cases:

\subsubsection*{Case $H=\{e\}$}
In this case the crossed  $G$-set is $G$ with the regular action and grading map the constant map  $e$. It is clear that the braiding $c_{G,G}$ 
is just the flip map \[(g_1,g_2)\mapsto (g_2,g_1)\]hence, really the symmetric group $\SymGp_n$ acts on $G^n$.

The set of  $G$-orbits is in biyection with $G^{n-1}$,  \begin{align*}
    \mathcal{O}(G^n)&\to G^{n-1}\\
    \mathcal{O}_G(g_1,g_2,\ldots, g_n)&\mapsto (e,g_1^{-1}g_2,\ldots, g_1^{-1}g_n).
\end{align*} 
Using the previous map the action of $\S_n$ is given by \[\sigma_1\Big( g_1,\ldots, g_{n-1}\Big)= \Big ( g_1^{-1},g^{-1}_1g_2,\ldots, g_1^{-1}g_{n-1}\Big)\]

and \[\sigma_i(g_1,\ldots, g_i,g_{i+1},\ldots,g_{n-1})=(g_1,\ldots,g_{i+1},g_i,\ldots,g_{n-1}), \quad 1<i<n.\]
It is clear that permutation action of $\SymGp_n$ on $G^{n-1}$ is faithful, thus the image is isomorphic to $\SymGp_n$.

\subsubsection*{Case $H=G$} In this case the crossed $G$-set is $G$ with the action by conjugation and grading map the identity map. Hence, the braiding is given by
\[c_{G,G}:(x,y)\mapsto (y,y^{-1}xy).\]
Note $c_{G,G}$  is symmetric if and only if  $G$ is abelian. 

If $G$ is abelian,  $G^n_e=\{(g_1,\ldots,g_{n-1},(g_1\ldots g_{n-1})^{-1})\}$ is the set of orbits and as the previous example the group $\SymGp_n$ acts faithfully.

\subsubsection{Dihedral group}

Every time we take $H$ to be a normal subgroup of $G$, the following proposition provides a way to simplify the situation. 

\begin{prop}\label{decoupledBasis}
Let $G$ be a finite group, $H\Normal G$, $\R$ a collection of representatives for $G/H$. Define $B_\gamma[H]\in\mathcal{Z}(\Vect_G)$ as 
\begin{equation*}
    B(H,\gamma) := \Span \{ b_{r,h} |\, r\in \R, h\in H \} 
\end{equation*}
with grading $|b_{r,h}|=h$ and the $G$-action
\begin{equation}\label{actionOnTheBs}
   g\Acts b_{r,h} = \eps(\kappa(g,r)\prescript{r^{-1}}{}{h}) b_{g\Acts r, \prescript{g}{}{h}}.
\end{equation}
Then, the mapping 
\begin{align*}
     B(H,\gamma) &\ra L(H,\gamma) \\
    b_{r,h} &\mapsto \chi_{r, \prescript{r^{-1}}{}{h}}
\end{align*}
is an isomorphism in $\mathcal{Z}(\Vect_G)$. 
\end{prop}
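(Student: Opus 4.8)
The plan is to verify directly that the stated linear map, call it $\Phi\colon B(H,\gamma)\to L(H,\gamma)$, $b_{r,h}\mapsto \chi_{r,\prescript{r^{-1}}{}{h}}$, is a well-defined bijection that intertwines the $G$-grading and the twisted $G$-action. Since $H\Normal G$, for $r\in\R$ and $h\in H$ the element $\prescript{r^{-1}}{}{h}=r^{-1}hr$ lies in $H$, so $\chi_{r,\prescript{r^{-1}}{}{h}}$ makes sense; moreover Theorem \ref{thm: Lagrangian are monomial} (in the form of the basis $B_\R$ from the remark following Corollary \ref{cor: Lagrangian are monomial }) tells us that $\{\chi_{r,f}\mid r\in\R,\,f\in H\}$ is a basis of $L(H,\gamma)$. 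For fixed $r$ the assignment $h\mapsto \prescript{r^{-1}}{}{h}$ is a bijection of $H$, so $\Phi$ sends the spanning set $\{b_{r,h}\}$ bijectively onto the basis $\{\chi_{r,f}\}$; hence $\Phi$ is a linear isomorphism of vector spaces, and in particular the $b_{r,h}$ are linearly independent, so $B(H,\gamma)$ is an honest object with the indicated basis.

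Next I would check that $\Phi$ preserves the grading. On the source side $|b_{r,h}|=h$ by definition, while on the target side $|\chi_{r,f}|=rfr^{-1}$ by the computation in the proof of Theorem \ref{thm: Lagrangian are monomial}; plugging $f=\prescript{r^{-1}}{}{h}=r^{-1}hr$ gives $|\chi_{r,\prescript{r^{-1}}{}{h}}|=r(r^{-1}hr)r^{-1}=h$, so the gradings agree. The remaining and substantive point is $G$-equivariance: I must show $\Phi(g\Acts b_{r,h})=g\Acts\Phi(b_{r,h})$ for all $g\in G$. The right-hand side is $g\Acts\chi_{r,\prescript{r^{-1}}{}{h}}$, which by \eqref{LambdaCoef} equals $\lambda_{\R\times H}(g;r,\prescript{r^{-1}}{}{h})\,\chi_{g\Acts r,\,\prescript{\kappa(g,r)}{}{(\prescript{r^{-1}}{}{h})}}$. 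The left-hand side, using \eqref{actionOnTheBs} and then $\Phi$, is $\eps(\kappa(g,r)\prescript{r^{-1}}{}{h})\,\chi_{g\Acts r,\,\prescript{(g\Acts r)^{-1}}{}{(\prescript{g}{}{h})}}$. So the proof reduces to two identities: first, that the two subscripts coincide, i.e. $\prescript{\kappa(g,r)r^{-1}}{}{h}=\prescript{(g\Acts r)^{-1}g}{}{h}$, which is immediate from the defining relation $gr=(g\Acts r)\kappa(g,r)$, hence $\kappa(g,r)r^{-1}=(g\Acts r)^{-1}g$ as elements of $G$ (here $H$ need not even be normal, but normality is what makes all conjugates land in $H$); and second, the scalar identity
\begin{equation*}
\lambda_{\R\times H}(g;r,\prescript{r^{-1}}{}{h})=\eps\big(\kappa(g,r),\prescript{r^{-1}}{}{h}\big),
\end{equation*}
where I have read the argument of $\eps$ in \eqref{actionOnTheBs} as the pair $(\kappa(g,r),\prescript{r^{-1}}{}{h})$.

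The main obstacle is this last scalar identity. Expanding $\lambda_{\R\times H}(g;r,\prescript{r^{-1}}{}{h})$ via \eqref{LambdaCoef} produces
\begin{equation*}
\ThreeCocycle\big((gr)^{-1},g^{-1};\prescript{r}{}{(\prescript{r^{-1}}{}{h})}\big)\,\ThreeCocycle\big(\kappa(g,r),(gr)^{-1};\prescript{gr}{}{(\prescript{r^{-1}}{}{h})}\big)\,\eps\big(\kappa(g,r),\prescript{r^{-1}}{}{h}\big),
\end{equation*}
and since $\prescript{r}{}{(\prescript{r^{-1}}{}{h})}=h$, I must show the two $\ThreeCocycle$-factors cancel, i.e. $\ThreeCocycle\big((gr)^{-1},g^{-1};h\big)\,\ThreeCocycle\big(\kappa(g,r),(gr)^{-1};\prescript{g}{}{h}\big)=1$ (using $\prescript{gr}{}{(\prescript{r^{-1}}{}{h})}=\prescript{g}{}{h}$). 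Here $\ThreeCocycle(-;-,-)$ denotes the "mixed" cocycle data $\ThreeCocycle(g;f,h)$ defined early in the excerpt; the cancellation should follow from its cocycle-type identities together with normalization $\ThreeCocycle(a,1,b)=1$ — the key structural input being that $\gamma$ trivializes $\ThreeCocycle|_{H}$ while $B(H,\gamma)$ has been set up with a pure $\eps$-action (no $\ThreeCocycle$-prefactor), so all the $\ThreeCocycle$-dependence of the $G$-action on $L(H,\gamma)$ must be absorbable into the change of basis $h\leftrightarrow \prescript{r^{-1}}{}{h}$. I would carry this out by expanding both mixed-cocycle symbols in terms of $\ThreeCocycle$ and its arguments, using $H\Normal G$ to keep conjugates in $H$, and matching terms; if a clean term-by-term cancellation is awkward, an alternative is to invoke the general classification (cited in the introduction via \cite{davydov2010modular,natale2016equivalence}) to argue that two twisted Yetter–Drinfeld modules over $\mathcal{Z}(\Vect_G)$ with the same underlying crossed $G$-set and compatible cocycle data are isomorphic, and then check only that $\Phi$ respects that data. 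Once equivariance is established, $\Phi$ is simultaneously $G$-linear and $G$-homogeneous, hence a morphism — and being bijective, an isomorphism — in $\mathcal{Z}(\Vect_G)$, completing the proof.
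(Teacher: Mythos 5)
Your overall strategy coincides with the paper's: verify that $\Phi$ carries the basis $\{b_{r,h}\}$ to the basis $\{\chi_{r,f}\}_{r\in\R,\,f\in H}$, that it preserves the grading via $|\chi_{r,\prescript{r^{-1}}{}{h}}|=r(r^{-1}hr)r^{-1}=h$, and that it intertwines the actions using $\kappa(g,r)r^{-1}=(g\Acts r)^{-1}g$. All of that is correct and is exactly what the paper does.

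The one place you stop short is the scalar identity, which you flag as the ``main obstacle'': showing that the two mixed-cocycle factors
$\ThreeCocycle\big((gr)^{-1},g^{-1};h\big)$ and $\ThreeCocycle\big(\kappa(g,r),(gr)^{-1};\prescript{g}{}{h}\big)$ cancel. There is in fact no obstacle: the proposition is stated in the untwisted center $\mathcal{Z}(\Vect_G)$, so $\ThreeCocycle\equiv 1$ on all of $G$ (here $\gamma$ is an honest $2$-cocycle on $H$, since $\delta\gamma=\ThreeCocycle|_{H^{\times 3}}=1$). Both symbols $\ThreeCocycle(-,-;-)$ are therefore identically $1$, and \eqref{LambdaCoef} collapses to $\lambda_{\R\times H}(g;r,f)=\eps(\kappa(g,r),f)$, which is precisely the coefficient in \eqref{actionOnTheBs} after the substitution $f=\prescript{r^{-1}}{}{h}$. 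This is the computation the paper records. I would also caution against your proposed fallback: two objects of $\mathcal{Z}(\Vect_G)$ with the same underlying crossed $G$-set need not be isomorphic (they can differ by a $2$-cocycle twist of the action), so invoking the classification of Lagrangian algebras to conclude the isomorphism would not be sound; the direct check is both necessary and, here, immediate.
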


\begin{proof}
We need to show the map preserves the grading and the $G$-representation. We have 
\begin{equation*}
    |\chi_{r,\prescript{r^{-1}}{}{h}}|=\prescript{r}{}{(\prescript{r^{-1}}{}{h})} = h = |b_{r,h}|
\end{equation*}
Now, since
\begin{equation*}
    g\cdot \chi_{r,\prescript{r^{-1}}{}{}h} = \eps(\kappa(g,r),\prescript{r^{-1}}{}{h})\chi_{g\Acts r, \prescript{\kappa(g,r)}{}{(\prescript{r^{-1}}{}{h})} }  ,               
\end{equation*}
and
\begin{equation*}
    \prescript{\kappa(g,r)}{}{(\prescript{r^{-1}}{}{h})} = \prescript{(g\Acts r)^{-1}}{}{ghg^{-1}},
\end{equation*}
we have that 
\begin{equation*}
    g\Acts b_{r,h} = \eps(\kappa(g,r),\prescript{r^-1}{}{h}) a_{g\Acts r, \prescript{(g\Acts r)^{-1}}{}{(\prescript{g}{}{h})}}.
\end{equation*}
Hence by \eqref{actionOnTheBs}  the map is equivariant. 
\end{proof}

Proposition \ref{decoupledBasis} works particularly well when $\gamma =1$, since equation \eqref{actionOnTheBs} is just 
\begin{equation*}
    g\Acts b_{r,h} = b_{g\Acts r, \prescript{g}{}{h}}.
\end{equation*}
Thus, the action of $G$ is "decoupled".  We use this idea in the following example. 

Let $G=D_{2k}$ be the dihedral groups of order $2k$  and $H=\langle r \rangle$. We take $\R=\{e,s\} = \{s^i\}_{i\in \ZZ/2\ZZ}$. 
Then 
\begin{equation*}
    |b_{s^{i_1},r^{j_1}}\otimes \cdots \otimes b_{s^{i_n},r^{j_n}}| = r^{\sum_{m=1}^n j_m},
\end{equation*}
and
\begin{equation*}
    \dim( B(H,\gamma)^{\otimes n}_e) = 2^n \times k^{n-1}.
\end{equation*} 
Since 
\begin{equation*}
    (s^i r^j )(s^k) = s^{i+k} r^{(-1)^k j}, 
\end{equation*}
we have that
\begin{align*}
    (s^i r^j) \Acts s^k = s^{i+k},&&  \text{ and }& 
    &\kappa(s^ir^j,s^k) = r^{(-1)^kj}.
\end{align*}
Hence, the action, on the set label is 
\begin{equation*}
    s^i r^j (s^k,r^l) = (s^{i+k},r^l)
\end{equation*}
It follows that the number of orbits in $(\R\times H)^n_e$ is 
\begin{equation*}
    2^{n-1}\times k^{n-1} = |G|^{n-1}.
\end{equation*}
Since $\gamma = 1$ all orbits are regular and then  $\dim(\Hom_{\mathcal{Z}(\Vect_G)}(\mathbb{C},L(H,1)^{\otimes n}))=|G|^{n-1}$.


\begin{thebibliography}{10}

\bibitem{beigi2011quantum}
Salman Beigi, Peter~W. Shor, and Daniel Whalen, \emph{The quantum double model
  with boundary: condensations and symmetries}, Comm. Math. Phys. \textbf{306}
  (2011), no.~3, 663--694. \MR{2825505}

\bibitem{bombin2008family}
Hector Bombin and MA~Martin-Delgado, \emph{Family of non-abelian kitaev models
  on a lattice: Topological condensation and confinement}, Physical Review B
  \textbf{78} (2008), no.~11, 115421.

\bibitem{bravyi1998quantum}
Sergey~B Bravyi and A~Yu Kitaev, \emph{Quantum codes on a lattice with
  boundary}, arXiv preprint quant-ph/9811052 (1998).

\bibitem{cong2016topological}
Iris Cong, Meng Cheng, and Zhenghan Wang, \emph{Topological quantum computation
  with gapped boundaries}, arXiv preprint arXiv:1609.02037 (2016).

\bibitem{cong2017cmp}
\bysame, \emph{Hamiltonian and algebraic theories of gapped boundaries in
  topological phases of matter}, Commun. Math. Phys. (to appear) (2017).

\bibitem{cong2017defects}
\bysame, \emph{On defects between gapped boundaries in two-dimensional
  topological phases of matter}, arXiv preprint arXiv:1703.03564 (2017).

\bibitem{cong2017universal}
\bysame, \emph{Universal quantum computation with gapped boundaries}, in
  preparation (2017).

\bibitem{davydov2010modular}
Alexei Davydov, \emph{Modular invariants for group-theoretical modular data.
  {I}}, J. Algebra \textbf{323} (2010), no.~5, 1321--1348. \MR{2584958}

\bibitem{davydov2013witt}
Alexei Davydov, Michael M\"uger, Dmitri Nikshych, and Victor Ostrik, \emph{The
  {W}itt group of non-degenerate braided fusion categories}, J. Reine Angew.
  Math. \textbf{677} (2013), 135--177. \MR{3039775}

\bibitem{davydov2017lagrangian}
Alexei Davydov and Darren Simmons, \emph{On lagrangian algebras in
  group-theoretical braided fusion categories}, Journal of Algebra \textbf{471}
  (2017), 149 -- 175.

\bibitem{DPR}
R.~Dijkgraaf, V.~Pasquier, and P.~Roche, \emph{Quasi {H}opf algebras, group
  cohomology and orbifold models}, Nuclear Phys. B Proc. Suppl. \textbf{18B}
  (1990), 60--72 (1991), Recent advances in field theory (Annecy-le-Vieux,
  1990). \MR{1128130}

\bibitem{Rowell}
Pavel Etingof, Eric Rowell, and Sarah Witherspoon, \emph{Braid group
  representations from twisted quantum doubles of finite groups}, Pacific J.
  Math. \textbf{234} (2008), no.~1, 33--41. \MR{2375313}

\bibitem{karpilovsky1985projective}
Gregory Karpilovsky, \emph{Projective representations of finite groups},
  Monographs and Textbooks in Pure and Applied Mathematics, vol.~94, Marcel
  Dekker, Inc., New York, 1985. \MR{788161}

\bibitem{kitaev2012models}
Alexei Kitaev and Liang Kong, \emph{Models for gapped boundaries and domain
  walls}, Comm. Math. Phys. \textbf{313} (2012), no.~2, 351--373. \MR{2942952}

\bibitem{kitaev2003fault}
A.Yu. Kitaev, \emph{Fault-tolerant quantum computation by anyons}, Annals of
  Physics \textbf{303} (2003), no.~1, 2 -- 30.

\bibitem{natale2016equivalence}
Sonia Natale, \emph{On the equivalence of module categories over a
  group-theoretical fusion category}, arXiv preprint arXiv:1608.04435 (2016).

\end{thebibliography}
\end{document}